\documentclass{article}

\usepackage{mathtools}
\usepackage{amssymb,amsfonts,amsmath,amsthm}
\usepackage{cite}
\usepackage[hidelinks]{hyperref}
\usepackage{indentfirst}

\usepackage{color}
\usepackage{graphicx}
\usepackage{geometry}
\usepackage{longtable}
\usepackage{subcaption} 
\usepackage{caption}
\usepackage{multirow}
\usepackage{array}

\usepackage{fancyhdr}
\usepackage{booktabs}
\newtheorem{theorem}{Theorem}[section]
\newtheorem{lemma}[theorem]{Lemma}
\newtheorem{corollary}[theorem]{Corollary}

\newtheoremstyle{questionstyle}
  {\topsep} 
  {\topsep} 
  {\itshape} 
  {} 
  {\bfseries} 
  {.} 
  {.5em} 
  {\thmname{#1}\thmnumber{ #2}\thmnote{ (#3)}} 

\theoremstyle{questionstyle}

\newtheoremstyle{conjecturestyle}
  {\topsep} 
  {\topsep} 
  {\itshape} 
  {} 
  {\bfseries} 
  {.} 
  {.5em} 
  {\thmname{#1}\thmnumber{ #2}\thmnote{ (#3)}} 

\theoremstyle{conjecturestyle}
\newtheorem{conjecture}{Conjecture}[section] 

\title{On the local average order of dominating sets{\footnote {Supported by National Natural Science Foundation of China (Grant
Number: 12426663)}}}

\author{Tingyun Chen$^{a}$, \quad Weihua He$^{a}${\footnote{Corresponding author, E-mail:hwh12@gdut.edu.cn}, \quad Hong-Jian Lai$^{b,c}$, \quad Jianping Li$^{a}$}\\
\small {$a.$ School of Mathematics and Statistics,}\\
\small {Guangdong University of Technology, Guangzhou, China}\\
\small {$b.$ School of Mathematics and Systems Science,}\\
\small {Guangdong Polytechnic Normal University, Guangzhou, China}\\
\small {$c.$ Department of Mathematics,}\\
\small {West Virginia University, Morgantown, USA}}
\date{}

\begin{document}
\maketitle
\begin{abstract}
The (global) average order of dominating sets of a graph is the average number of vertices of its dominating sets. Analogously, the local average order of dominating sets is the average number of vertices of its dominating sets containing a fixed vertex. In this paper, we show that the local average order of dominating sets of a graph with $n$ vertices is at least $\frac{n+1}{2}$, with equality if and only if the degree of the fixed vertex is $n-1$. Furthermore, for a graph on $n$ vertices without isolated vertices, we show that $\frac{5n-1}{6}$ is an upper bound for the local average order of dominating sets. Additionally, we give a proof of an exact formula for the local average order of dominating sets when the degree of the fixed vertex is $n-2$, and determine an upper bound for the local average order of dominating sets when the fixed vertex is an $l$-stem ($l \geq 2$).

\noindent\textbf{Keywords:} Dominating set, Average order, Domination polynomial, Stem
\end{abstract}

\section{Introduction}

Given a simple finite undirected graph $G$, let $V(G)$ and $E(G)$ be the vertex set and the edge set, respectively. For a vertex $v \in V(G)$, a vertex $u$ adjacent to $v$ in $G$ is called a \textit{neighbor} of $v$. The set of all neighbors of $v$ is called the \textit{open neighborhood} of $v$, denoted by $N_G(v)$. We also use $N_G[v]$ to denote the \textit{closed neighborhood} of $v$, that is, $N_G[v] = N_G(v) \cup \{v\}$. When there is no ambiguity, these notations are abbreviated as $N(v)$ and $N[v]$, respectively. The degree of $v$ is denoted by $\deg(v)$. In particular, a vertex $v$ is called an \textit{isolated vertex} if $N(v) = \emptyset$. A subset $S$ of $V(G)$ is a \textit{dominating set} of $G$ if $N[S] = V(G)$, which implies that every vertex in $V(G)-S$ is adjacent to at least one vertex in $S$. The {\em domination number} $\gamma(G)$ is the order of the smallest dominating set of $G$. We call a vertex of degree one a \textit{leaf} and a vertex adjacent to a leaf a \textit{stem}. For a stem $v$, let $L_G(v)$ denote the collection of all leaf neighbors of $v$ in $G$ and $L_G[v]=L_G(v)\cup \left\{v\right\}$. When only referring to a single graph, the subscript $G$ can be omitted and we denote them simply as $L(v)$ and $L[v]$. A stem with $l$ leaves is called an {\em $l$-stem}.

Considerable research has been devoted to average graph parameters. In 1977, Doyle and Graver\cite{MD_OYLE1977147} provided a formula for calculating the mean distance of trees, and demonstrated that the maximum mean distance in a connected graph with $n$ vertices is attained on a path with value $\frac{n+1}{3}$, while complete graphs achieve the minimum average distance with value $1$.
Subsequently, an increasing number of average graph parameters were introduced. In 1983, Jamison\cite{MST_JAMISON1983207} introduced the average order of a subtree of a graph. Many researchers carried out in-depth studies of it, focusing on its extremal problems \cite{MST_CAMBIE2021103388,MST_Mol2019, li2024} or the impact of edge modifications on it \cite{MST_JAMISON198470,MST_Cameron2021,MST_Luo2023, wang2024, cambie2024}.
In addition, various average graph parameters such as the average sizes of independent sets\cite{Indep_Davies2017,Indep_Andriantiana2020}, the average sizes of matchings \cite{Matching_2020}, the average connectivity\cite{AC_BEINEKE200231}, and the average eccentricity\cite{AE_DANKELMANN20191261,AE_HOROLDAGVA2021109,AE_ALOCHUKWU202398} have been successively proposed.

The average order of dominating sets was introduced by Beaton and Brown\cite{DS_Beaton2021}. Let $\mathcal{D}(G)$ be the collection of all dominating sets of $G$. The {\em average order of dominating sets} of $G$ is denoted by $avd(G)$, which is given by
\[
avd(G) = \frac{\sum_{S \in \mathcal{D}(G)} |S|}{|\mathcal{D}(G)|}.
\]
Let $ d_k(G) = \left| \left\{ S \in \mathcal{D}(G) : |S| = k \right\} \right| $.
The quantity $avd(G)$ can also be defined as
\[
avd(G) =\frac{\sum_{k=\gamma(G)}^{|V(G)|} kd_k(G)}{\sum_{k=\gamma(G)}^{|V(G)|} d_k(G)} .
\]
For brevity, let
$|\mathcal{D}(G)| = \sum_{k=\gamma(G)}^{|V(G)|}d_k(G)=\Gamma(G)$
and $\sum_{S \in \mathcal{D}(G)} |S| = \sum_{k=\gamma(G)}^{|V(G)|}kd_k(G)=\Gamma'(G)$. Thus we have
$$avd(G) = \frac{\Gamma'(G)}{\Gamma(G)}.$$

For all graphs with $n$ vertices, it is shown that the complete graph $K_n$ minimizes and the empty graph maximizes the average order of dominating sets. In addition, bounds have been developed for the average order of dominating sets for graphs without isolated vertices, where it was shown that $avd(G)\leq \frac{3n}{4}$ for every graph $G$ of order $n$ without isolated vertices \cite{DS_Beaton2021}. But this upper bound is not best possible. Erey\cite{DS_EREY2023113127} advanced this research by proving that for any forest without isolated vertices, $avd(G)$ has an upper bound of $\frac{2n}{3}$. This result significantly improved the upper bound in \cite{DS_Beaton2021} provided by Beaton and Brown, while also characterizing the maximal graphs with respect to the average order of dominating sets in forests without isolated vertices, such that every non-leaf vertex is an $l$-stem ($l\leq2$).
Recently, Beaton and Cameron\cite{DS_Brown2024} generalized Erey's conclusion on forests to arbitrary graphs. They showed that every graph $G$ of order $n$ without isolated vertices satisfies $avd(G)\leq \frac{2n}{3}$, proving that this upper bound is attained only by star-like graphs.

For a fixed vertex $v$, let $\mathcal{D}_v(G)$ denote the collection of all dominating sets of $G$ containing $v$. Therefore, we define the {\em local average order of dominating sets (containing $v$)} as
\[
avd_v(G) = \frac{\sum_{S_v \in \mathcal{D}_v(G)} |S_v|}{|\mathcal{D}_v(G)|}.
\]
Let $d_k(G, v) = \left| \left\{ S_v \in \mathcal{D}_v(G) : |S_v| = k \right\} \right| $.
The quantity $avd_v(G)$ can also be defined by
\[
avd_v(G) =\frac{\sum_{k=\gamma(G)}^{|V(G)|} kd_k(G,v)}{\sum_{k=\gamma(G)}^{|V(G)|} d_k(G,v)}.
\]
For brevity, let $|\mathcal{D}_v(G)| =\sum_{k=\gamma(G)}^{|V(G)|} d_k(G,v)= \Gamma_v(G)$ and $\sum_{S_v \in \mathcal{D}_v(G)} |S_v|
=\sum_{k=\gamma(G)}^{|V(G)|} kd_k(G,v) =\Gamma_v'(G)$. Thus we have $$avd_v(G) = \frac{\Gamma_v'(G)}{\Gamma_v(G)}.$$

The global average describes the sizes of dominating sets collectively, but does not distinguish the roles of individual vertices. Conditioning on the inclusion of a fixed vertex allows us to examine how its degree and neighborhood structure affect the average size. More precisely, if $S$ is chosen uniformly from $\mathcal{D}(G)$ and $X=|S|$, then $avd_v(G)=\mathbb{E}[X\mid v\in S]$. This local quantity also records information about the variation in the sizes of dominating sets. Indeed, writing $\mu=avd(G)$ and $p_v=\Gamma_v(G)/\Gamma(G)$, we have $\sum_v p_v=\mu$ and $\sum_v p_v avd_v(G)=\mathbb{E}[X^2]$. Consequently,
\[
\frac{\sum_{v\in V(G)}p_v avd_v(G)}{\sum_{v\in V(G)}p_v}
=\mu+\frac{\operatorname{Var}(X)}{\mu}.
\]
Thus, bounds on the local averages relate the position of individual vertices to the distribution of dominating set sizes.

According to this definition, we can derive the values of $avd_v(G)$ for specific vertices in certain classes of graphs. For example,
in the complete graph with $n$ vertices $K_n$, for any vertex $v$,  $$avd_v(K_n)=\frac{n+1}{2}.$$
In the empty graph with $n$ vertices $\overline{K_n}$, for any vertex $v$,
$$avd_v(\overline{K_n}) = n.$$
In the star graph with $n$ vertices $K_{1,n-1}$, there are two distinct types of vertices. When $\deg(v)=n-1$, $$avd_v(K_{1,n-1}) = \frac{n+1}{2}.$$
When $\deg(v)=1$, $$avd_v(K_{1,n-1}) = \frac{(n+2)2^{n-3} + n - 1}{2^{n-2} + 1}.$$

This paper is structured as follows. In Section 2, we establish the lower bound for the local average order of dominating sets, and further characterize the necessary and sufficient conditions to achieve this lower bound. In Section 3, we establish upper bounds for the local average order of dominating sets in graphs without isolated vertices. In Sections 4 and 5, we focus on the scenario in which the fixed vertex $v$ satisfies $\deg(v)=n-2$ and is an $l$-stem, respectively. The last section contains concluding remarks and a conjecture.

\section{Lower bound}

Beaton and Brown \cite{DS_Beaton2021} showed that $\frac{n2^{n-1}}{2^n - 1}$ is a lower bound for the global average order of dominating sets of graphs with $n$ vertices. It is of interest to study the lower bound of $avd_v(G)$ for arbitrary graphs. A natural question arises: given a graph $G$ of order $n$, can we determine the minimum value of $avd_v(G)$ among all vertices $v \in V(G)$?

Let $X$ be a nonempty finite set and $\mathcal{P}(X)$ its powerset. For any nonempty subset $\mathcal{A} \subseteq \mathcal{P}(X)$, we define the average order $av(\mathcal{A})$ of $\mathcal{A}$ to be $av(\mathcal{A}) = \frac{1}{|\mathcal{A}|} \sum_{A \in \mathcal{A}} |A|.$ The Boolean lattice $\mathcal{B}(X)$ on a finite set $X$ is the partial order $(\mathcal{P}(X), \subseteq)$. An ideal of this partial order is a subset $\mathcal{A}$ of $\mathcal{P}(X)$ that is {\em closed downwards}, that is, if $\sigma_1 \subseteq \sigma_2$ and $\sigma_2 \in \mathcal{A}$, then $\sigma_1 \in \mathcal{A}$, as well. Based on this, we know the following lemma.

\begin{lemma}\cite[Proposition 2.3]{DS_Beaton2021}\label{l1}
Let $X$ be a nonempty finite set with $n$ elements, and let $\mathcal{A}$ be a nonempty ideal of $\mathcal{B}(X)$. Let $\mathcal{A}_k = \{ A \in \mathcal{A} \colon |A| = k \}$ for $0\leq k\leq n$. Then for all $k \leq \frac{n}{2}$, we have
\[|\mathcal{A}_k| \geq |\mathcal{A}_{n-k}|.\]

Hence $av(\mathcal{A}) \leq \frac{n}{2}.$
\end{lemma}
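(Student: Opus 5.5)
The plan is to prove the inequality $|\mathcal{A}_k| \geq |\mathcal{A}_{n-k}|$ with a symmetric chain decomposition of the Boolean lattice, or equivalently a matching argument between levels. The averaging statement then follows by pairing levels.

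Fix $k \le n/2$. Recall the classical fact (a consequence of Hall's theorem applied to the regular bipartite inclusion graph between consecutive levels) that there is an injection $\phi$ from the $(n-k)$-subsets of $X$ into the $k$-subsets of $X$ with $\phi(B)\subseteq B$ for every $B$. To get it, take injections $\phi_j$ from level $j$ to level $j-1$ with $\phi_j(B)\subseteq B$ for each $j > n/2$. These exist because the bipartite graph between levels $j$ and $j-1$ is biregular with degrees $j$ and $n-j+1$, and $j \geq n-j+1$ when $j>n/2$, so Hall's condition holds for the side of level $j$. For $n/2 \ge j > k$ we instead need injections from level $j$ into level $j-1$ when $\binom{n}{j-1}<\binom nj$, which fails there. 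So I would switch to the cleaner route of symmetric chain decompositions (de Bruijn–Tengbergen–Kruyswijk). Partition $\mathcal{P}(X)$ into saturated chains $C_1\subset C_2\subset\cdots$ in which each chain runs from some level $r$ to level $n-r$. Every chain meeting level $n-k$ then also meets level $k$, since $k\ge r$ follows from $n-k\le n-r$. Map each $(n-k)$-set to the $k$-set in the same chain; this is an injection, and the image is a subset of the original set.

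Now, if $B\in\mathcal{A}_{n-k}$, then $\phi(B)\subseteq B$, and downward closure gives $\phi(B)\in\mathcal{A}_k$. Hence $\phi$ restricts to an injection $\mathcal{A}_{n-k}\to\mathcal{A}_k$, which proves $|\mathcal{A}_k|\ge|\mathcal{A}_{n-k}|$.

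For the average, write $\sum_{A\in\mathcal{A}}(|A|-\tfrac n2)=\sum_{k}(k-\tfrac n2)|\mathcal{A}_k|$. Group the terms for $k$ and $n-k$ with $k<n/2$; the term $k=n/2$, if present, contributes zero. Each pair contributes $(\tfrac n2-k)(|\mathcal{A}_{n-k}|-|\mathcal{A}_k|)\le 0$. Therefore $\sum_{A}|A|\le \tfrac n2|\mathcal{A}|$, i.e. $av(\mathcal{A})\le n/2$. The nonemptiness of $\mathcal{A}$ is needed only so that the average is defined. The main obstacle is the existence of the order-preserving injection between levels $n-k$ and $k$; I would cite or briefly sketch the symmetric chain decomposition, constructed inductively on $n$ by taking each chain for $X\setminus\{x\}$ and splitting it into two chains after adjoining $x$.
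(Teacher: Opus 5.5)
Your argument is correct. The paper gives no proof of this lemma; it imports it from Beaton and Brown, so there is no in-paper argument to match. The symmetric chain decomposition you use is the standard way to get the levelwise inequality, and your steps check out:
\begin{itemize}
\item A saturated symmetric chain through level $n-k$ spans levels $r$ to $n-r$ with $r\le k\le n-k\le n-r$.
\item So it contains exactly one $k$-set, and that $k$-set is a subset of the $(n-k)$-set.
\item Disjointness of the chains gives injectivity.
\item Downward closure keeps the image in $\mathcal{A}$.
\item Pairing levels $k$ and $n-k$ then gives $av(\mathcal{A})\le n/2$.
\end{itemize}

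You gave up on Hall's theorem too early. There is no need to go level by level, because the inclusion graph between levels $n-k$ and $k$ is already regular bipartite. An $(n-k)$-set has $\binom{n-k}{k}$ subsets of size $k$, and a $k$-set has $\binom{n-k}{n-2k}=\binom{n-k}{k}$ supersets of size $n-k$. Hence this graph has a perfect matching, which is exactly your map $\phi$, with no chain decomposition needed. In any write-up, delete the abandoned level-by-level detour.

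For the averaging conclusion alone, the paper's proof of Theorem~\ref{t22} uses a lighter argument. For each $x\in X$, the map $A\mapsto A\setminus\{x\}$ injects the members of $\mathcal{A}$ containing $x$ into those avoiding $x$. Summing over $x$ gives $\sum_{A\in\mathcal{A}}|A|\le\sum_{A\in\mathcal{A}}(n-|A|)$, i.e.\ $av(\mathcal{A})\le n/2$.

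That route is more elementary and makes the equality case transparent. Equality forces every such injection to be a bijection, so $\mathcal{A}$ is also upward closed and hence equals $\mathcal{P}(X)$; this is exactly what Theorem~\ref{t22} needs for its strictness claim. However, it does not give $|\mathcal{A}_k|\ge|\mathcal{A}_{n-k}|$. Your chain (or matching) argument proves that stronger levelwise statement, which the lemma asserts.
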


Now we can derive the lower bound of the local average order of dominating sets.

\begin{theorem}\label{t22}
Let $G$ be a graph of order $n$. For every $v\in V(G)$, $avd_v(G) \geq \frac{n+1}{2}$ with equality if and only if $\deg(v)=n-1$.
\end{theorem}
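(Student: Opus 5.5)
We will reduce the statement to Lemma~\ref{l1} by complementation. Fix $v$ and let $X = V(G)-\{v\}$, so $|X| = n-1$. Consider the family
\[
\mathcal{A} = \{\, V(G) - S : S \in \mathcal{D}_v(G) \,\} \subseteq \mathcal{P}(X).
\]
Every $S\in\mathcal{D}_v(G)$ contains $v$, so $V(G)-S \subseteq X$. Moreover, any superset of a dominating set is dominating, and any superset of a set containing $v$ still contains $v$. Hence $\mathcal{D}_v(G)$ is closed upwards within the sets containing $v$, and $\mathcal{A}$ is closed downwards in $\mathcal{B}(X)$. Since $V(G)\in\mathcal{D}_v(G)$, the family $\mathcal{A}$ is a nonempty ideal. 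Lemma~\ref{l1} then gives $av(\mathcal{A}) \le \frac{n-1}{2}$. Because $|S| = n - |V(G)-S|$, we get
\[
avd_v(G) = n - av(\mathcal{A}) \ge n - \frac{n-1}{2} = \frac{n+1}{2}.
\]

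For the equality case, first suppose $\deg(v) = n-1$. Then every set containing $v$ dominates, so $\mathcal{A} = \mathcal{P}(X)$. Its average order is exactly $\frac{n-1}{2}$ by the symmetry $\binom{n-1}{k} = \binom{n-1}{n-1-k}$, and equality follows.

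Conversely, suppose $\deg(v) < n-1$, and pick a vertex $u \neq v$ not adjacent to $v$. Then $X$ itself is not in $\mathcal{A}$, because $S=\{v\}$ fails to dominate $u$. So $|\mathcal{A}_{n-1}| = 0 < 1 = |\mathcal{A}_0|$, the latter since $\emptyset\in\mathcal{A}$. We must show this strict inequality forces $av(\mathcal{A}) < \frac{n-1}{2}$. Lemma~\ref{l1} gives $|\mathcal{A}_k| \ge |\mathcal{A}_{n-1-k}|$ for all $k \le \frac{n-1}{2}$. Pairing layer $k$ with layer $n-1-k$, we can write
\[
\sum_{A\in\mathcal{A}} \Bigl(|A| - \tfrac{n-1}{2}\Bigr)
= \sum_{k<\frac{n-1}{2}} \Bigl(k-\tfrac{n-1}{2}\Bigr)\bigl(|\mathcal{A}_k| - |\mathcal{A}_{n-1-k}|\bigr),
\]
where the middle layer, if there is one, contributes zero. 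Each term is $\le 0$. The $k=0$ term equals $-\frac{n-1}{2}\cdot 1 < 0$ whenever $n\ge 2$, so the sum is strictly negative. Hence $av(\mathcal{A}) < \frac{n-1}{2}$ and $avd_v(G) > \frac{n+1}{2}$.

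The only real obstacle is this strictness step. Lemma~\ref{l1} is quoted only as a non-strict averaging bound, so we must reopen it through the layerwise inequalities as above rather than use its conclusion directly. The degenerate case $n=1$ is trivial: there $\deg(v)=0=n-1$ and $avd_v(G)=1$.
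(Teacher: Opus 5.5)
Your proof is correct. The complementation to an ideal of $\mathcal{B}(V(G)\setminus\{v\})$, the application of Lemma~\ref{l1}, and the case $\deg(v)=n-1$ match the paper. Your strictness step, however, takes a different route.

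You use the layerwise inequalities $|\mathcal{A}_k|\ge|\mathcal{A}_{n-1-k}|$ from Lemma~\ref{l1} and pair layer $k$ with layer $n-1-k$. You then observe that the extreme pair contributes $-\frac{n-1}{2}\bigl(|\mathcal{A}_0|-|\mathcal{A}_{n-1}|\bigr)=-\frac{n-1}{2}<0$.

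The paper does not use the layerwise statement for strictness. For each $u\neq v$ it uses the injection $T\mapsto T\setminus\{u\}$ from members of the ideal containing $u$ to members omitting $u$. It takes an inclusion-maximal member $T$, which is not the whole ground set because $\{v\}$ is not dominating. For any $u\notin T$, this injection misses $T$. Summing over $u$ then gives $\sum_T|T|<\sum_T(n-1-|T|)$.

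The paper's argument is a self-contained double count that uses only downward closure, and it effectively reproves the averaging bound of Lemma~\ref{l1}. Yours is shorter, but it relies on the stronger layer-by-layer comparison imported from Beaton and Brown.

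Both arguments establish the same general fact: a nonempty ideal that omits the top element has average order strictly below half the size of the ground set.
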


\begin{proof}
The assertion is immediate when $n=1$, so suppose that $n\geq2$. Let $\mathcal{I}$ be the collection of sets $V(G)\setminus S$ with $S\in\mathcal{D}_v(G)$. Then $\mathcal{I}$ is a nonempty ideal of the Boolean lattice on $V(G)\setminus\{v\}$. By Lemma \ref{l1},
\[
avd_v(G)=n-av(\mathcal{I})\geq n-\frac{n-1}{2}=\frac{n+1}{2}.
\]
If $\deg(v)=n-1$, every subset containing $v$ is a dominating set, so $\mathcal{I}=\mathcal{P}(V(G)\setminus\{v\})$ and equality holds.

Conversely, suppose that $\deg(v)<n-1$. Then $\{v\}$ is not a dominating set, so $\mathcal{I}$ is a proper ideal. For each $u\ne v$, the map $T\mapsto T\setminus\{u\}$ injects the members of $\mathcal{I}$ containing $u$ into those omitting $u$. Moreover, choose an inclusion-maximal $T\in\mathcal{I}$ and a vertex $u\in (V(G)\setminus\{v\})\setminus T$. Since $T\cup\{u\}\notin\mathcal{I}$, this injection is not surjective for that $u$. Summing the resulting inequalities over all $u\ne v$ gives $av(\mathcal{I})<(n-1)/2$, and hence $avd_v(G)>(n+1)/2$.
\end{proof}

\section{Upper bound}

For a graph $G$ with $n$ vertices, we have the trivial bound $avd_v(G) \leq n$, with equality if and only if $G=\overline{K_n}$. According to \cite{DS_Brown2024}, we know that for a graph $G$ with $n$ vertices and $r$ isolated vertices, $avd(G) \leq \frac{2n + r}{3}$. If $v$ is an isolated vertex in $G$, all dominating sets of $G$ must contain $v$, thus $avd_v(G) = avd(G) \leq \frac{2n + r}{3}$. Now, consider the vertex $v$ of a connected graph $G$, what is the upper bound of $avd_v(G)$? Similarly, we explore the upper bound of $avd_v(G)$ by $\deg(v)$ and the minimum degree $\delta$.

For a fixed dominating set $S$ of graph $G$, if $S - v$ is no longer a dominating set, we say $v\in S$ is {\em critical} with respect to $S$. The set of all critical vertices with respect to $S\in \mathcal{D}(G)$ is denoted by $a(S)$, that is,
\[
a(S) = \{v \in S : S - v \notin \mathcal{D}(G)\}.
\]
On the other hand, for a fixed vertex $v$ in graph $G$, we consider the dominating set $S$ where $v$ is critical with respect to $S$, and let $a_v(G) = \{S \in \mathcal{D}(G) : S - v \notin \mathcal{D}(G)\}$. Obviously, when $v$ ranges over all vertices in $G$, we obtain the number of all critical vertices.

Let $\mathcal{D}_{-v}(G)$ denote the collection of dominating sets which do not contain vertex $v$. Just as $|\mathcal{D}_v(G)| = \Gamma_v(G)$, we  also denote $|\mathcal{D}_{-v}(G)|=\Gamma_{-v}(G)$. Clearly, $\Gamma(G) = \Gamma_v(G) + \Gamma_{-v}(G)$. Now, we establish the quantitative relationship among $|a_v(G)|$, $\Gamma_v(G)$ and $\Gamma_{-v}(G)$.

\begin{lemma}\cite[proof of Lemma 3.1]{DS_Beaton2021}\label{l31}
Let $G$ be a graph of order $n$ and $v$ be a vertex of $G$, then
\[
|a_v(G)| = \Gamma_v(G)-\Gamma_{-v}(G).
 \]
\end{lemma}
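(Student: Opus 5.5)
We prove the identity with an explicit bijection-type argument on $\mathcal{D}_v(G)$. Define the map
\[
\varphi:\mathcal{D}_{-v}(G)\to\mathcal{D}_v(G),\qquad \varphi(S)=S\cup\{v\}.
\]
This map is well defined. If $S$ is a dominating set not containing $v$, then $S\cup\{v\}$ is a superset of a dominating set. It is therefore dominating, and it contains $v$. The map is clearly injective, since $S=\varphi(S)\setminus\{v\}$ whenever $v\notin S$.

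The key step is to identify the image of $\varphi$. We claim
\[
\varphi(\mathcal{D}_{-v}(G))=\{T\in\mathcal{D}_v(G): T-v\in\mathcal{D}(G)\}.
\]
For the inclusion ``$\subseteq$'', take $T=S\cup\{v\}$. Then $T-v=S$, which is dominating. For the inclusion ``$\supseteq$'', take $T\in\mathcal{D}_v(G)$ with $T-v$ dominating. Then $T-v\in\mathcal{D}_{-v}(G)$ and $\varphi(T-v)=T$.

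Hence $\mathcal{D}_v(G)$ splits into two disjoint parts:
\begin{itemize}
\item the image of $\varphi$, which has size $\Gamma_{-v}(G)$;
\item the sets $T\in\mathcal{D}_v(G)$ with $T-v\notin\mathcal{D}(G)$.
\end{itemize}

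Next we check that this second part is exactly $a_v(G)$. Here we must be careful: by definition $a_v(G)$ consists of dominating sets $S$ with $S-v\notin\mathcal{D}(G)$. Any such $S$ must contain $v$, because if $v\notin S$ then $S-v=S$ is dominating. So $a_v(G)\subseteq\mathcal{D}_v(G)$, and $a_v(G)$ is precisely the complement of the image of $\varphi$ inside $\mathcal{D}_v(G)$.

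Counting the two parts then gives
\[
\Gamma_v(G)=\Gamma_{-v}(G)+|a_v(G)|,
\]
which is the claimed identity. No step is a real obstacle; the only subtle point is the observation that every set in $a_v(G)$ automatically contains $v$.
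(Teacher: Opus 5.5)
Your proof is correct and follows essentially the same route as the argument the paper cites from Beaton and Brown: adding $v$ injects $\mathcal{D}_{-v}(G)$ into $\mathcal{D}_v(G)$, and the sets it misses are exactly those in which $v$ is critical. You also make explicit the check that every member of $a_v(G)$ contains $v$, which is the one detail needed for the count to go through.
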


\begin{lemma}\label{l32}
Let $G$ be a graph of order $n$, then
\[
\sum_{S_v \in \mathcal{D}_v(G)} |a(S_v)| = 2\Gamma_v'(G) - (n - 1)\Gamma_v(G) - \Gamma(G).
\]
\end{lemma}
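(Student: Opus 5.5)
We will prove the identity by double counting pairs $(S_v,u)$ with $S_v\in\mathcal{D}_v(G)$ and $u\in a(S_v)$, grouping the pairs by the vertex $u$ instead of by the set $S_v$. Exchanging the order of summation gives
\[
\sum_{S_v\in\mathcal{D}_v(G)}|a(S_v)|=\sum_{u\in V(G)}\bigl|\{S\in\mathcal{D}(G): v\in S,\ u\in S,\ S-u\notin\mathcal{D}(G)\}\bigr|.
\]
We split this sum into the term $u=v$ and the terms $u\neq v$.

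For $u=v$, the condition $v\in S$ is automatic for every $S\in a_v(G)$. So the term equals $|a_v(G)|$, which by Lemma \ref{l31} is $\Gamma_v(G)-\Gamma_{-v}(G)=2\Gamma_v(G)-\Gamma(G)$.

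For $u\neq v$, we would like a version of Lemma \ref{l31} restricted to sets containing $v$. Among dominating sets containing $v$, the map $S\mapsto S\cup\{u\}$ sends those omitting $u$ injectively into those containing $u$, since supersets of dominating sets are dominating. A set containing $u$ fails to be in the image exactly when $S-u$ is not dominating, that is, when $u$ is critical for $S$. Hence the $u$-term equals $\Gamma_{v,u}-\Gamma_{v,-u}$, where these count the dominating sets containing $v$ that contain, respectively omit, $u$. Writing this as $2\Gamma_{v,u}-\Gamma_v(G)$ and summing over $u\neq v$ gives
\[
2\sum_{u\neq v}\Gamma_{v,u}-(n-1)\Gamma_v(G).
\]
Double counting again, $\sum_{u\neq v}\Gamma_{v,u}=\sum_{S_v}(|S_v|-1)=\Gamma_v'(G)-\Gamma_v(G)$.

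Adding the two contributions,
\[
2\Gamma_v(G)-\Gamma(G)+2\Gamma_v'(G)-2\Gamma_v(G)-(n-1)\Gamma_v(G)=2\Gamma_v'(G)-(n-1)\Gamma_v(G)-\Gamma(G),
\]
as claimed. The only delicate step is checking that the injection for $u\neq v$ stays within $\mathcal{D}_v(G)$, which holds because adding $u$ never removes $v$. The rest is bookkeeping.
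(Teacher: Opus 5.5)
Your proof is correct and follows essentially the same route as the paper. Both arguments split the double count into the $u=v$ term, handled by Lemma \ref{l31}, and the $u\neq v$ terms, handled by the same injection restricted to $\mathcal{D}_v(G)$, then sum using $\sum_{u\neq v}|\mathcal{D}_u^v(G)|=\Gamma_v'(G)-\Gamma_v(G)$. The only cosmetic difference is that you substitute $\Gamma_{v,-u}=\Gamma_v(G)-\Gamma_{v,u}$ before summing, whereas the paper sums $|\mathcal{D}_{-u}^v(G)|$ separately to get $n\Gamma_v(G)-\Gamma_v'(G)$.
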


\begin{proof}First, consider the case where $v$ is critical with respect to $S$. By Lemma \ref{l31}, 
\[
|a_v(G)| = \Gamma_v(G)-\Gamma_{-v}(G) = \Gamma_v(G) - (\Gamma(G) - \Gamma_v(G)) = 2\Gamma_v(G) - \Gamma(G).
\]

Next, consider the case for all other vertices except $v$. Let $a_u^v(G) = \{ S_v \in \mathcal{D}_v(G) : S_v - u \notin \mathcal{D}_v(G) \}$ in which $u$ is critical with respect to $S_v$.
Similarly, let $\mathcal{D}_u^v(G)$ be the collection of all dominating sets in $\mathcal{D}_v(G)$ containing $u$ and $\mathcal{D}_{-u}^v(G)$ be those not containing $u$. Note that, when $u = v$, we simply write $\mathcal{D}_v^v(G)$ as $\mathcal{D}_v(G)$. By Lemma \ref{l31}, we also have $|a_u^v(G)| = |\mathcal{D}_u^v(G)| - |\mathcal{D}_{-u}^v(G)|.$
Since
\[
\sum_{u \in V(G)} |\mathcal{D}_u^v(G)| = \sum_{i=1}^{n} i \cdot d_i(G,v) = \Gamma_v'(G),
\]
it follows that
\[
\sum_{\substack{u \in V(G) \\ u \neq v}} |\mathcal{D}_u^v(G)| = \sum_{u \in V(G)} |\mathcal{D}_u^v(G)| - |\mathcal{D}_v(G)| = \Gamma_v'(G) - \Gamma_v(G).
\]
Also,
\begin{equation}
\sum_{\substack{u \in V(G) \\ u \neq v}} |\mathcal{D}_{-u}^v(G)| = \sum_{i=1}^{n} (n-i) \cdot d_i(G,v) = n\Gamma_v(G) - \Gamma_v'(G).
\label{eq:D_-V}
\end{equation}
Therefore
\begin{align*}
\sum_{S_v \in \mathcal{D}_v(G)} |a(S_v)| &= |a_v(G)| + \sum_{\substack{u \in V(G) \\ u \neq v}} |a_u^v(G)| \\
&= |\mathcal{D}_v(G)| - |\mathcal{D}_{-v}(G)| + \sum_{\substack{u \in V(G) \\ u \neq v}} \left( |\mathcal{D}_u^v(G)| - |\mathcal{D}_{-u}^v(G)| \right) \\
&= 2\Gamma_v(G) - \Gamma(G) + (\Gamma_v'(G) - \Gamma_v(G)) - (n\Gamma_v(G) - \Gamma_v'(G)) \\
&= 2\Gamma_v'(G) - (n - 1)\Gamma_v(G)- \Gamma(G).
\end{align*}
\end{proof}

In order to further explore the upper bound of $avd_v(G)$, we will make a more refined classification of $a(S)$ and $N(S)$. Here $N(S)=(\bigcup_{u\in S}N(u))\setminus S$. For $N(S)$, we define $N_1(S)$ as the set of vertices outside $S$ which have a single neighbor in $S$, and $N_2(S)$ as the set of vertices outside $S$ which have more than one neighbor in $S$. In other words,
\[
N_1(S) = \{v \in V - S : |N[v] \cap S| = 1\},
\]
\[
N_2(S) = \{v \in V - S: |N[v] \cap S| \geq 2\}.
\]

Since $S$ is a dominating set, it is clear that $N(S) = N_1(S) \cup N_2(S)$. Similarly, for $a(S)$, we define sets $a_1(S)$ and $a_2(S)$ such that $a(S) = a_1(S) \cup a_2(S)$ as follows:
\[
a_1(S) = \{v \in a(S) : N(v) \cap N_1(S) \neq \emptyset\},
\]
\[
a_2(S) = \{v \in a(S) : N(v) \cap N_1(S) = \emptyset\}.
\]

\begin{lemma}\cite[Lemma 3.2]{DS_Beaton2021}\label{l33}
Let $G$ be a graph. For any $S \in \mathcal{D}(G)$, $|a_1(S)| \leq |N_1(S)|$.
\end{lemma}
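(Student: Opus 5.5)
The plan is to construct an injection from $a_1(S)$ into $N_1(S)$. For each $x\in a_1(S)$ we use the definition of $a_1(S)$ to choose a vertex $f(x)\in N(x)\cap N_1(S)$. The map $f\colon a_1(S)\to N_1(S)$ is then well defined, and it remains to show that it is injective.

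Injectivity comes straight from the definition of $N_1(S)$. Suppose $f(x)=f(y)=w$. Then $w\notin S$ and $|N[w]\cap S|=1$. Both $x$ and $y$ lie in $S$ and are adjacent to $w$, so both belong to $N[w]\cap S$. Since this set has only one element, $x=y$. Hence $f$ is injective and $|a_1(S)|\le |N_1(S)|$.

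There is no real obstacle here. The only point to check is that $N_1(S)$ is defined using the closed neighbourhood $N[w]$. For $w\notin S$ this is the same as $N(w)\cap S$, so the uniqueness argument is valid. We also note that criticality of $x$ is never used; the inequality holds with $a_1(S)$ replaced by any set of vertices of $S$ that have a neighbour in $N_1(S)$.
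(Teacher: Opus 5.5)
Your injection argument is correct and is essentially the standard proof from Beaton and Brown, which the paper cites rather than reproduces: each vertex of $N_1(S)$ has exactly one neighbour in $S$, so choosing a neighbour in $N_1(S)$ for each vertex of $a_1(S)$ gives an injection. Your closing remark is true but is not a real generalization, because any vertex of $S$ with a neighbour in $N_1(S)$ is automatically critical, so that set is exactly $a_1(S)$.
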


Let $P_v(G)$ be the collection of subsets of $V(G)\setminus N[v]$ that dominate all vertices of $G-v$. Equivalently, these are the dominating sets of $G-N[v]$ that also dominate $N(v)$ in $G$. Now, let $P_u^v(G)=\{S\in P_u(G):v\in S\}$. In particular, $P_v^v(G)=\emptyset$.

\begin{lemma}\label{l34}
Let $G$ be a graph. For any vertex $v \in V(G)$, we have $\frac{\Gamma(G)}{\Gamma_v(G)} < 2$.
\end{lemma}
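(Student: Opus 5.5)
The claim $\Gamma(G)/\Gamma_v(G)<2$ is equivalent to $\Gamma_{-v}(G)<\Gamma_v(G)$, since $\Gamma(G)=\Gamma_v(G)+\Gamma_{-v}(G)$. I will prove this by an injection from $\mathcal{D}_{-v}(G)$ into $\mathcal{D}_v(G)$ that is not surjective.

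\textbf{The injection.} Define $\phi(S)=S\cup\{v\}$ for $S\in\mathcal{D}_{-v}(G)$.
\begin{itemize}
\item Any superset of a dominating set is dominating, so $\phi(S)\in\mathcal{D}_v(G)$.
\item Since $v\notin S$, we recover $S=\phi(S)\setminus\{v\}$, so $\phi$ is injective.
\end{itemize}
This gives $\Gamma_{-v}(G)\le\Gamma_v(G)$. Equivalently, this is Lemma \ref{l31}: $\Gamma_v(G)-\Gamma_{-v}(G)=|a_v(G)|\ge 0$.

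\textbf{Strictness.} It suffices to show $a_v(G)\neq\emptyset$, i.e.\ to find a dominating set $T\ni v$ with $T\setminus\{v\}$ not dominating. Such a $T$ is not in the image of $\phi$. Take
\[
T=\{v\}\cup\bigl(V(G)\setminus N[v]\bigr).
\]
\begin{itemize}
\item $T$ dominates $G$: every vertex outside $T$ lies in $N(v)$ and is dominated by $v$.
\item $T\setminus\{v\}=V(G)\setminus N[v]$ contains no vertex of $N[v]$, so it does not dominate $v$.
\end{itemize}
Hence $v$ is critical with respect to $T$, and $|a_v(G)|\ge1$. Degenerate cases cause no trouble:
\begin{itemize}
\item If $v$ is isolated, $T=V(G)$, and $T\setminus\{v\}$ fails to dominate $v$.
\item If $\deg(v)=n-1$, $T=\{v\}$, and $\emptyset$ is not dominating.
\end{itemize}

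Combining the two steps, $\Gamma_{-v}(G)\le\Gamma_v(G)-1<\Gamma_v(G)$, so $\Gamma(G)<2\Gamma_v(G)$. The only point needing care is strictness, which the explicit set $T$ settles, so I expect no real obstacle.
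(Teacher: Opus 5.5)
Your proof is correct and takes essentially the same approach as the paper. The paper uses the same witness set $V(G)\setminus N(v)$ (your $T$) to show $a_v(G)\neq\emptyset$, then concludes $\Gamma_{-v}(G)<\Gamma_v(G)$ from Lemma~\ref{l31}; your non-surjective injection $S\mapsto S\cup\{v\}$ simply reproves that lemma's inequality directly.
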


\begin{proof}
Let $S=V(G)\setminus N(v)$. Then $S$ is a dominating set containing $v$: vertices outside $S$ are adjacent to $v$, and vertices in $S$ dominate themselves. However, $S\setminus\{v\}$ does not dominate $v$. Thus $S\in a_v(G)$, and Lemma \ref{l31} gives $\Gamma_v(G)-\Gamma_{-v}(G)=|a_v(G)|>0$. Consequently,
\[
\frac{\Gamma(G)}{\Gamma_v(G)}
=1+\frac{\Gamma_{-v}(G)}{\Gamma_v(G)}<2.
\]
\end{proof}

\begin{lemma}\cite[proof of Theorem 3.3]{DS_Beaton2021}\label{l35}
Let $G$ be a graph. Then we have $(2^{\deg(v)} - 1) |P_v(G)| \leq |\mathcal{D}_{-v}(G)|$ and
\[
\sum_{S \in \mathcal{D}(G)} |a_2(S)| = \sum_{v \in V(G)} |P_v(G)|.
\]
\end{lemma}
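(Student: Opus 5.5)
We prove the two assertions of Lemma \ref{l35} separately. Both are counting arguments built on the correspondence between $P_v(G)$ and dominating sets in which $v$ is critical but has no private neighbor outside $S$.

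\textbf{First inequality.} We construct an injection $\Phi:(\mathcal{P}(N(v))\setminus\{\emptyset\})\times P_v(G)\to\mathcal{D}_{-v}(G)$ given by $\Phi(A,T)=T\cup A$. The image is a dominating set not containing $v$, for three reasons.
\begin{itemize}
\item $T$ dominates every vertex of $G-v$, by the definition of $P_v(G)$.
\item $A$ is a nonempty subset of $N(v)$, so $v$ is dominated.
\item $v\notin T\cup A$, because $T\subseteq V(G)\setminus N[v]$ and $A\subseteq N(v)$.
\end{itemize}
Injectivity holds because $T$ and $A$ lie in the disjoint sets $V(G)\setminus N[v]$ and $N(v)$. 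Each is therefore recovered from the image as its intersection with the corresponding part. Counting the domain gives $(2^{\deg(v)}-1)|P_v(G)|\le|\mathcal{D}_{-v}(G)|$.

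\textbf{Second identity.} We double count pairs $(S,v)$ with $S\in\mathcal{D}(G)$ and $v\in a_2(S)$. For a fixed $v$ we claim a bijection between $\{S\in\mathcal{D}(G): v\in a_2(S)\}$ and $P_v(G)$, given by $S\mapsto S\setminus\{v\}$.

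First we characterize the sets $S$ with $v\in a_2(S)$. Since $v\in a(S)$, some vertex $w$ is dominated by $S$ but not by $S-v$. This means $N[w]\cap S=\{v\}$. If $w\neq v$, then $w\in N_1(S)\cap N(v)$, which contradicts $v\in a_2(S)$. So the only vertex losing domination is $v$ itself, and $N(v)\cap S=\emptyset$. Moreover, no vertex of $N(v)$ has $v$ as its unique dominator, so every vertex of $N(v)$ is dominated by $S\setminus\{v\}$. Every other vertex of $G-v$ is also dominated by $S\setminus\{v\}$. Hence $S\setminus\{v\}\subseteq V\setminus N[v]$ dominates $G-v$, that is, $S\setminus\{v\}\in P_v(G)$.

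Conversely, take $T\in P_v(G)$ and set $S=T\cup\{v\}$. Then $S$ dominates $G$. Since $T\cap N[v]=\emptyset$, the set $S-v=T$ does not dominate $v$, so $v\in a(S)$. Finally, every $u\in N(v)$ lies outside $S$, and $u$ is dominated both by $v$ and by some vertex of $T$. Thus $|N[u]\cap S|\ge2$ and $u\notin N_1(S)$, which gives $v\in a_2(S)$.

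The two maps are mutually inverse. Summing over $v$ gives $\sum_{S}|a_2(S)|=\sum_v|P_v(G)|$.

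The only delicate step is the first part of the characterization: showing that $v\in a_2(S)$ forces $N(v)\cap S=\emptyset$. This comes from checking that the vertex which loses domination when $v$ is removed must be $v$ itself.
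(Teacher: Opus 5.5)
Your proof is correct and is essentially the argument of Beaton and Brown that the paper cites without reproving: the injection $(A,T)\mapsto T\cup A$ from (nonempty subsets of $N(v)$) $\times\, P_v(G)$ into $\mathcal{D}_{-v}(G)$, and the bijection $S\mapsto S\setminus\{v\}$ between $\{S\in\mathcal{D}(G): v\in a_2(S)\}$ and $P_v(G)$, summed over $v$. Your key step is the right one: any vertex not dominated by $S-v$ must be $v$ itself, since otherwise it lies in $N_1(S)\cap N(v)$. The degenerate case $\deg(v)=0$ is covered automatically.
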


We are ready to improve the upper bound of $avd_v(G)$ based on the degree of the vertex $v$ and the minimum degree $\delta$.

\begin{theorem}\label{t36}
Let $G$ be a graph of order $n \geq 2$ with minimum degree $\delta\geq1$. For every vertex $v$ of $G$, we have
\[
avd_v(G) < \frac{2n - 1}{3} + \frac{2^{\deg(v)+1} - 1}{3(2^{\deg(v)} - 1)} + \frac{n - \deg(v) - 1}{3(2^{\delta+1} - 2)}.
\]
In particular, if $\deg(v)\geq \delta\geq1$, then
\[
avd_v(G) < \frac{5n + 2}{6}.
\]
\end{theorem}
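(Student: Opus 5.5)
The plan is to combine the counting identity of Lemma \ref{l32} with a term-by-term upper bound on $|a(S)|$ for $S\in\mathcal{D}_v(G)$, and then control the leftover terms with Lemmas \ref{l34} and \ref{l35}.

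\textbf{Step 1: bound $|a(S)|$ pointwise.} For $S\in\mathcal{D}_v(G)$, Lemma \ref{l33} gives
\[
|a(S)|=|a_1(S)|+|a_2(S)|\le |N_1(S)|+|a_2(S)|\le n-|S|+|a_2(S)|.
\]
Summing over $\mathcal{D}_v(G)$ and inserting Lemma \ref{l32} yields
\[
2\Gamma_v'(G)-(n-1)\Gamma_v(G)-\Gamma(G)\le n\Gamma_v(G)-\Gamma_v'(G)+\sum_{S\in\mathcal{D}_v(G)}|a_2(S)|.
\]
Dividing by $3\Gamma_v(G)$ gives
\[
avd_v(G)\le \frac{2n-1}{3}+\frac{\Gamma(G)}{3\Gamma_v(G)}+\frac{1}{3\Gamma_v(G)}\sum_{S\in\mathcal{D}_v(G)}|a_2(S)|.
\]

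\textbf{Step 2: localize the $a_2$ sum.} A vertex $u$ lies in $a_2(S)$ exactly when $N(u)\cap S=\emptyset$ and $S\setminus\{u\}$ dominates $G-u$, i.e.\ $S\setminus\{u\}\in P_u(G)$. This is the same correspondence underlying Lemma \ref{l35}. Restricting to sets containing $v$, we get
\[
\sum_{S\in\mathcal{D}_v(G)}|a_2(S)|=|P_v(G)|+\sum_{u\ne v}|P_u^v(G)|.
\]
Only vertices $u\notin N[v]$ contribute to the second sum, since $v\in S$ and $N(u)\cap S=\emptyset$ force $u\notin N(v)$. So there are at most $n-\deg(v)-1$ nonzero terms.

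\textbf{Step 3: bound the two kinds of terms.} For $u=v$, Lemma \ref{l35} gives $|P_v(G)|\le \Gamma_{-v}(G)/(2^{\deg(v)}-1)$. Writing $\Gamma(G)=\Gamma_v(G)+\Gamma_{-v}(G)$ and $d=\deg(v)$, the $v$-contributions combine to
\[
\frac{\Gamma(G)+|P_v(G)|}{\Gamma_v(G)}\le 1+\frac{\Gamma_{-v}(G)}{\Gamma_v(G)}\cdot\frac{2^{d}}{2^{d}-1}<\frac{2^{d+1}-1}{2^{d}-1}.
\]
The strict inequality uses $\Gamma_{-v}(G)<\Gamma_v(G)$, which is exactly the content of the proof of Lemma \ref{l34}.

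For $u\notin N[v]$, I will build an injection-type count inside $\mathcal{D}_v(G)$. Take $T\in P_u^v(G)$ and $A\subseteq N(u)$. Then $T\cup A\cup\{u\}$ (for any $A$) and $T\cup A$ (for $A\neq\emptyset$) are all dominating sets containing $v$. They are pairwise distinct, and $T$ is recovered as $S\setminus N[u]$. Hence
\[
(2^{\deg(u)+1}-1)\,|P_u^v(G)|\le\Gamma_v(G),
\]
and therefore $|P_u^v(G)|/\Gamma_v(G)\le 1/(2^{\delta+1}-2)$. Summing over the at most $n-\deg(v)-1$ such $u$ gives the third term of the bound.

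\textbf{Step 4: the particular case.} For fixed $\delta\ge1$, the bound is decreasing in $\deg(v)$, because both $(2^{d+1}-1)/(2^{d}-1)$ and $n-d-1$ decrease in $d$. It is also decreasing in $\delta$. So the worst case is $\deg(v)=\delta=1$, which gives
\[
\frac{2n-1}{3}+1+\frac{n-2}{6}=\frac{5n+2}{6}.
\]

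The main obstacle I expect is Step 3 for $u\neq v$. There one must verify carefully that the sets $T\cup A$ really dominate $u$, which holds because $A\neq\emptyset$ with $A\subseteq N(u)$. One must also check that the whole family is injective, so as to obtain the constant $2^{\delta+1}-2$ rather than the weaker $2^{\delta}-1$ that a direct application of Lemma \ref{l35} would give.
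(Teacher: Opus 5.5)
Your proof is correct and follows the paper's argument essentially step for step: Lemma \ref{l32} combined with the pointwise bound $|a_1(S)|\le|N_1(S)|\le n-|S|$, the split $\sum_{S\in\mathcal{D}_v(G)}|a_2(S)|=|P_v(G)|+\sum_{u\notin N[v]}|P_u^v(G)|$, Lemma \ref{l35} for the $v$-term, and $\Gamma_{-v}(G)<\Gamma_v(G)$ for strictness. The only deviation is minor and concerns $u\notin N[v]$: you inject $P_u^v(G)\times(\mathcal{P}(N[u])\setminus\{\emptyset\})$ directly into $\mathcal{D}_v(G)$ and get the slightly sharper $(2^{\deg(u)+1}-1)|P_u^v(G)|\le\Gamma_v(G)$, whereas the paper passes through $\mathcal{D}_{-u}^v(G)$ and the adjoin-$u$ injection to obtain $2(2^{\deg(u)}-1)|P_u^v(G)|\le\Gamma_v(G)$.
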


\begin{proof}
Observe that no vertex in $a_2(S_v)$ is adjacent to any other vertex in $S_v$. Thus, for any $u \in N(v)$, it is known that $u \notin a_2(S_v)$, that is $P_u^v(G)=\emptyset$. For $u\notin N[v]$, adjoining $u$ injects $\mathcal{D}_{-u}^v(G)$ into $\mathcal{D}_u^v(G)$, so $|\mathcal{D}_{-u}^v(G)|\leq\Gamma_v(G)/2$. Also, the injection underlying Lemma \ref{l35} preserves the requirement that $v$ is selected: for $S\in P_u^v(G)$ and $\emptyset\ne T\subseteq N(u)$, the sets $S\cup T$ are distinct members of $\mathcal{D}_{-u}^v(G)$. Hence
$(2^{\deg(u)}-1)|P_u^v(G)|\leq|\mathcal{D}_{-u}^v(G)|$. Together with Lemma \ref{l35}, this yields
\begin{align}
\sum_{S_v \in \mathcal{D}_v(G)} |a_2(S_v)| &= |P_v(G)| + \sum_{u \notin N[v]} |P_u^v(G)| \label{eq:sum of a2(S)}\\
&\leq \frac{|\mathcal{D}_{-v}(G)|}{2^{\deg(v)} - 1} + \sum_{u \notin N[v]} \frac{|\mathcal{D}_{-u}^v(G)|}{2^{\deg(u)} - 1}\nonumber \\
&\leq \frac{\Gamma(G) - \Gamma_v(G)}{2^{\deg(v)} - 1} + \sum_{u \notin N[v]} \frac{1}{2^\delta - 1} \cdot \frac{\Gamma_v(G)}2 \nonumber\\
&= \frac{\Gamma(G) - \Gamma_v(G)}{2^{\deg(v)} - 1}+\frac{(n - \deg(v) - 1)\Gamma_v(G)}{2^{\delta+1} - 2}\nonumber.
\end{align}
By Lemma \ref{l33}, $|a_1(S_v)| \leq |N_1(S_v)|$. Together with Equation (\ref{eq:D_-V}) we obtain
$$\sum_{S_v \in \mathcal{D}_v(G)} |a_1(S_v)| \leq \sum_{S_v \in \mathcal{D}_v(G)} |N_1(S_v)| \leq \sum_{S_v \in \mathcal{D}_v(G)} |V(G) - S_v| = \sum_{\substack{u \in V(G) \\ u \neq v}} |\mathcal{D}^v_{-u}(G)| = n\Gamma_v(G) - \Gamma'_v(G).$$
By Lemma \ref{l32}, $\sum_{S_v \in \mathcal{D}_v(G)} |a(S_v)| = 2\Gamma_v'(G) - (n-1)\Gamma_v(G) - \Gamma(G)$, and hence from
\begin{equation}
\sum_{S_v \in \mathcal{D}_v(G)} |a(S_v)| = \sum_{S_v \in \mathcal{D}_v(G)} |a_1(S_v)| + \sum_{S_v \in \mathcal{D}_v(G)} |a_2(S_v)|,
\label{eq:sum of a(S_v)}
\end{equation}
we get
\[
2\Gamma_v'(G) - (n-1)\Gamma_v(G) - \Gamma(G) \leq n\Gamma_v(G) - \Gamma_v'(G) + \frac{(n - \deg(v) - 1)\Gamma_v(G)}{2^{\delta+1} - 2} + \frac{\Gamma(G) - \Gamma_v(G)}{2^{\deg(v)} - 1}.
\]
By reorganizing the above inequality, we have
\[
\frac{\Gamma_v'(G)}{\Gamma_v(G)} \leq \frac{2n - 1}{3} + \frac{2^{\deg(v)} \cdot \frac{\Gamma(G)}{\Gamma_v(G)} - 1}{3(2^{\deg(v)} - 1)} + \frac{n - \deg(v) - 1}{3(2^{\delta+1} - 2)}.
\]
By Lemma \ref{l34}, $\frac{\Gamma(G)}{\Gamma_v(G)} < 2$. Therefore,
\[
avd_v(G)=\frac{\Gamma_v'(G)}{\Gamma_v(G)} < \frac{2n-1}{3} + \frac{2^{\deg(v)+1}-1}{3(2^{\deg(v)}-1)} + \frac{n-\deg(v)-1}{3(2^{\delta+1}-2)}.
\]
In particular, when $\deg(v) \geq \delta \geq 1$,
\[
avd_v(G) < \frac{2n-1}{3} + \frac{2^{\deg(v)+1}-1}{3(2^{\deg(v)}-1)} + \frac{n-\deg(v)-1}{3(2^{\delta+1}-2)} \leq \frac{5n+2}{6}.
\]
\end{proof}

Theorem \ref{t36} gives an upper bound in terms of the degree of the fixed vertex and the minimum degree. To obtain a further bound that uses the degrees outside its closed neighborhood, we first consider a family of sets meeting prescribed subsets.

\begin{lemma}\label{l37}
Let $X$ be a set of $m$ elements and let $C_1,\ldots,C_r$ be nonempty subsets of $X$. Define
\[
\mathcal{F}=\{T\subseteq X:T\cap C_i\ne\emptyset\text{ for }1\leq i\leq r\}.
\]
If $T$ is chosen uniformly from $\mathcal{F}$, then
\[
\mathbb{E}|T|\leq\frac{m}{2}
+\frac12\sum_{i=1}^r\frac{|C_i|}{2^{|C_i|}-1}.
\]
For $r=0$, the family is $\mathcal{P}(X)$ and the sum is empty.
\end{lemma}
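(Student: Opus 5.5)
The plan is to reuse the critical-element counting behind Lemma \ref{l31} and Lemma \ref{l32}, now applied to the family $\mathcal{F}$, and then bound the critical elements one constraint $C_i$ at a time. First I observe that $\mathcal{F}$ is closed upwards in $\mathcal{P}(X)$, and that $\mathcal{F}\neq\emptyset$ because $X\in\mathcal{F}$. For $x\in X$, let $\mathcal{F}_x$ be the members of $\mathcal{F}$ containing $x$ and $\mathcal{F}_{-x}$ those omitting $x$. The map $T\mapsto T\cup\{x\}$ injects $\mathcal{F}_{-x}$ into $\mathcal{F}_x$. Its image is exactly the set of $T\in\mathcal{F}_x$ with $T\setminus\{x\}\in\mathcal{F}$. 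Hence
\[
|\mathcal{F}_x|-|\mathcal{F}_{-x}|=\bigl|\{T\in\mathcal{F}_x : T\setminus\{x\}\notin\mathcal{F}\}\bigr|.
\]
Call such an $x$ \emph{critical} for $T$, and write $c(T)$ for the set of critical elements of $T$. Summing over $x$, and using $\sum_x|\mathcal{F}_x|=\sum_{T\in\mathcal{F}}|T|$ and $\sum_x|\mathcal{F}_{-x}|=m|\mathcal{F}|-\sum_{T\in\mathcal{F}}|T|$, gives
\[
\mathbb{E}|T|=\frac{m}{2}+\frac{1}{2|\mathcal{F}|}\sum_{T\in\mathcal{F}}|c(T)|.
\]
So it suffices to show $\sum_{T\in\mathcal{F}}|c(T)|\le |\mathcal{F}|\sum_{i=1}^r \frac{|C_i|}{2^{|C_i|}-1}$. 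When $r=0$ there are no critical elements and the bound is immediate.

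Next I charge each critical element to a constraint. If $x\in c(T)$, then $T\setminus\{x\}$ misses some $C_i$ while $T$ meets it, so $T\cap C_i=\{x\}$. Hence $|c(T)|\le \#\{i : |T\cap C_i|=1\}$. It is therefore enough to prove, for each fixed $i$ with $c=|C_i|$,
\[
\#\{T\in\mathcal{F}: |T\cap C_i|=1\}\le \frac{c}{2^c-1}|\mathcal{F}|.
\]
To prove this I partition $\mathcal{F}$ according to $R=T\setminus C_i$. For a fixed $R$, let $\mathcal{U}_R=\{B\subseteq C_i : R\cup B\in\mathcal{F}\}$. Since $\mathcal{F}$ is an up-set, so is $\mathcal{U}_R$, and $\emptyset\notin\mathcal{U}_R$. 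Suppose $\mathcal{U}_R$ contains exactly $k\ge1$ singletons $\{x_1\},\dots,\{x_k\}$. Then $\mathcal{U}_R$ contains every subset of $C_i$ meeting $\{x_1,\dots,x_k\}$, so $|\mathcal{U}_R|\ge 2^c-2^{c-k}$. It remains to check the elementary inequality
\[
k(2^c-1)\le c\,(2^c-2^{c-k}) \quad\text{for } 1\le k\le c .
\]
This is equivalent to $\frac{1-2^{-k}}{k}\ge\frac{1-2^{-c}}{c}$, which follows because $t\mapsto(1-2^{-t})/t$ is decreasing for $t>0$ (it is the average of the concave function $1-2^{-t}$ over $[0,t]$, up to the normalization $1-2^{0}=0$). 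Thus the number of singletons in $\mathcal{U}_R$ is at most $\frac{c}{2^c-1}|\mathcal{U}_R|$. Summing over $R$ gives the displayed per-constraint bound, and summing over $i$ completes the proof.

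I expect the main obstacle to be this last per-constraint step. The local structure $\mathcal{U}_R$ is an arbitrary up-set in $\mathcal{P}(C_i)\setminus\{\emptyset\}$, so one must use the up-closure to force many non-singleton members; the naive injection from Lemma \ref{l35} only gives a factor like $1/(2^{c}-1)$ per critical element when $k=1$. The fix is to count all $k$ singletons together against the $2^c-2^{c-k}$ sets they generate, and then invoke the monotonicity of $(1-2^{-t})/t$.
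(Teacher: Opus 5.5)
Your proposal is correct and follows the paper's proof essentially step for step. Both arguments derive $\mathbb{E}|T|=\frac{m}{2}+\frac12\mathbb{E}|c(T)|$ from up-closure, charge each critical element to a constraint with $|T\cap C_i|=1$, and condition on $T\setminus C_i$ to compare the $k$ singletons with the $2^{c}-2^{c-k}$ sets they generate. Both then finish with the monotonicity of $(1-2^{-t})/t$, which the paper states as ``$q/(1-2^{-q})$ is increasing''.

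One small wording fix: $(1-2^{-t})/t$ is not the average of the concave function $1-2^{-t}$ over $[0,t]$. It is the secant slope from the origin of that function, equivalently the average of its decreasing derivative over $[0,t]$, and that is why it decreases.
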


\begin{proof}
For $T\in\mathcal{F}$, let $c(T)$ be the number of elements $u\in T$ such that $T\setminus\{u\}\notin\mathcal{F}$. Since $\mathcal{F}$ is closed upwards, counting pairs of its members that differ in one element gives
\[
\sum_{T\in\mathcal{F}}(|T|-c(T))
=\sum_{T\in\mathcal{F}}(m-|T|),
\qquad
2\mathbb{E}|T|-m=\mathbb{E}c(T).
\]
Every element counted by $c(T)$ is the unique element of $T\cap C_i$ for some $i$. Distinct such elements require distinct indices, and hence
\[
c(T)\leq\sum_{i=1}^r\mathbf{1}_{\{|T\cap C_i|=1\}}.
\]
Fix an index $i$ and write $k=|C_i|$. Conditional on any feasible choice of $T\setminus C_i$, the allowed subsets of $C_i$ form a nonempty upward-closed family not containing the empty set. If this family contains no singleton, the conditional probability of $|T\cap C_i|=1$ is zero. Otherwise, let $q\geq1$ be its number of singletons. It contains every subset meeting the $q$ corresponding elements and therefore has at least $2^k-2^{k-q}$ members. Thus
\[
\Pr\bigl(|T\cap C_i|=1\mid T\setminus C_i\bigr)
\leq\frac{q}{2^k-2^{k-q}}\leq\frac{k}{2^k-1}.
\]
The last inequality follows because $q/(1-2^{-q})$ is increasing for positive integers $q$. Averaging over $T\setminus C_i$ and summing over $i$ proves the result.
\end{proof}

Now we can obtain another upper bound for $avd_v(G)$.

\begin{theorem}\label{t38}
Let $G$ be a graph of order $n$ and let $v_0\in V(G)$. Then
\[
avd_{v_0}(G)\leq\frac{n+1}{2}
+\frac12\sum_{u\notin N[v_0]}
\frac{\deg(u)+1}{2^{\deg(u)+1}-1}.
\]
In particular, if $G$ has no isolated vertices, then
\[
avd_{v_0}(G)\leq\frac{5n-2\deg(v_0)+1}{6}
\leq\frac{5n-1}{6}.
\]
\end{theorem}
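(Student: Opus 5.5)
The plan is to apply Lemma \ref{l37} directly to the family of complements-free description of $\mathcal{D}_{v_0}(G)$. A set $S\ni v_0$ is dominating exactly when every vertex $u\notin N[v_0]$ has $N[u]\cap S\neq\emptyset$; vertices of $N[v_0]$ are automatically dominated by $v_0$. So we write $S=\{v_0\}\cup T$ with $T\subseteq X:=V(G)\setminus\{v_0\}$, $m=n-1$. For each $u\notin N[v_0]$ we set $C_u=N[u]$, noting $v_0\notin N[u]$, so $C_u\subseteq X$ is nonempty with $|C_u|=\deg(u)+1$. Then $\mathcal{D}_{v_0}(G)$ is in bijection with the family $\mathcal{F}$ of Lemma \ref{l37}, and $avd_{v_0}(G)=1+\mathbb{E}|T|$ for $T$ uniform in $\mathcal{F}$.

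Lemma \ref{l37} then gives
\[
avd_{v_0}(G)\leq 1+\frac{n-1}{2}+\frac12\sum_{u\notin N[v_0]}\frac{\deg(u)+1}{2^{\deg(u)+1}-1},
\]
which is the first inequality. (When $v_0$ is adjacent to everything the sum is empty, which is consistent with Theorem \ref{t22}.)

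For the second part, we assume there are no isolated vertices, so $\deg(u)\geq1$ for every $u$. The function $k\mapsto k/(2^k-1)$ is decreasing for integers $k\geq1$ (values $1,2/3,3/7,\dots$), so each term with $k=\deg(u)+1\geq2$ is at most $2/3$. There are $n-\deg(v_0)-1$ such $u$, hence
\[
avd_{v_0}(G)\leq\frac{n+1}{2}+\frac{n-\deg(v_0)-1}{3}=\frac{5n-2\deg(v_0)+1}{6}.
\]
Using $\deg(v_0)\geq1$ then gives the bound $\frac{5n-1}{6}$.

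The only point needing care is checking that the reduction matches the hypotheses of Lemma \ref{l37} exactly. We must confirm that domination of $N[v_0]$ imposes no constraint, and that each $C_u$ lies in $X$, which requires $v_0\notin N[u]$ for $u\notin N[v_0]$. The monotonicity of $k/(2^k-1)$ is also routine: the ratio of consecutive terms is $\frac{(k+1)(2^k-1)}{k(2^{k+1}-1)}\leq1$ for $k\geq1$. Since Lemma \ref{l37} does all the combinatorial work, I expect no real obstacle.
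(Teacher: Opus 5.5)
Your proposal is correct and follows the paper's proof essentially verbatim. Like the paper, you write $S=\{v_0\}\cup T$ with constraint sets $C_u=N[u]$ for $u\notin N[v_0]$, apply Lemma \ref{l37} with $m=n-1$, and bound each term by $2/3$ over the $n-\deg(v_0)-1$ non-neighbors; your explicit checks that $v_0\notin N[u]$ and that $k/(2^k-1)$ is decreasing fill in details the paper leaves implicit.
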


\begin{proof}
Let $X=V(G)\setminus\{v_0\}$. A set $T\subseteq X$ satisfies $T\cup\{v_0\}\in\mathcal{D}_{v_0}(G)$ if and only if
\[
T\cap N[u]\ne\emptyset\qquad\text{for every }u\notin N[v_0].
\]
For such a vertex $u$, the set $N[u]$ is a subset of $X$ of size $\deg(u)+1$. Apply Lemma \ref{l37} to these closed neighborhoods. The bijection $T\mapsto T\cup\{v_0\}$ gives
\[
avd_{v_0}(G)=1+\mathbb{E}|T|
\leq\frac{n+1}{2}
+\frac12\sum_{u\notin N[v_0]}
\frac{\deg(u)+1}{2^{\deg(u)+1}-1}.
\]
If $G$ has no isolated vertices, then $\deg(u)\geq1$ and
$(\deg(u)+1)/(2^{\deg(u)+1}-1)\leq2/3$. Since there are $n-\deg(v_0)-1$ vertices outside $N[v_0]$, it follows that
\[
avd_{v_0}(G)\leq\frac{n+1}{2}
+\frac{n-\deg(v_0)-1}{3}
=\frac{5n-2\deg(v_0)+1}{6}
\leq\frac{5n-1}{6}.
\]
\end{proof}

\section{The local average order of dominating sets containing a fixed vertex with degree \texorpdfstring{$n-2$}{n-2}}

In fact, for some specific vertices in a graph $G$ with $n$ vertices, the local average order of dominating sets containing them can be calculated. Recall that we show the average order of dominating sets containing the central vertex or leaves in a star graph in Section 1. Therefore, in this section we focus on the local average order of dominating sets containing the fixed vertex with degree $n-2$.

Note that $ d_k(G) = \left| \left\{ S \in \mathcal{D}(G) : |S| = k \right\} \right| $. The domination polynomial of $G$ is defined as
\[
D(G, x) = \sum_{k=\gamma(G)}^{|V(G)|} d_k(G) x^k.
\]
We use the conventions $D(\emptyset,x)=1$ and $avd(\emptyset)=0$.

Since $avd(G)$ is equal to the logarithmic derivative of $D(G,x)$ evaluated at $x = 1$, the average order of dominating sets is also expressed as
\[
avd(G) = \frac{D'(G, 1)}{D(G, 1)} = \frac{\sum_{k=\gamma(G)}^{|V(G)|} k d_k(G)}{\sum_{k=\gamma(G)}^{|V(G)|} d_k(G)}.
\]

Now we turn to the local average order of dominating sets containing a fixed vertex.
The {\em local domination polynomial about the vertex $v$} is denoted by $D_{v\in S}(G,x)$, and defined by
\begin{equation}
D_{v\in S}(G, x) = \sum_{k=\gamma(G)}^{|V(G)|} d_k(G, v) x^k.\label{eq:D_vinS(G,x))}
\end{equation}
In the case without ambiguity, it is abbreviated as $D_v(G, x)$. Similarly, the local average order of dominating sets containing a fixed vertex $v$ is also expressed as
\[
avd_v(G) = \frac{D'_v(G, 1)}{D_v(G, 1)} = \frac{\sum_{k=\gamma(G)}^{|V(G)|} k d_k(G, v)}{\sum_{k=\gamma(G)}^{|V(G)|} d_k(G, v)}.
\]

%

Now we can determine the local average order of dominating sets containing a fixed vertex $v$, where $deg(v) =n-2$.

\begin{theorem}\label{t42}
Let $G$ be a connected graph of order $n\geq 3$, and let
$v\in V(G)$ satisfy $\deg(v)=n-2$. Suppose that $u$ is the unique
vertex in $V(G)\setminus\{v\}$ not adjacent to $v$, then
\[
avd_v(G) =\frac{(n+1)2^{n-2}- (n-\deg(u))2^{n-3- \deg(u)} }{2^{n-1} - 2^{n-2-\deg(u)}}.
\]
\end{theorem}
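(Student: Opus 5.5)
Since $\deg(v)=n-2$, the only vertex outside $N[v]$ is $u$. So for $T\subseteq X=V(G)\setminus\{v\}$, the set $T\cup\{v\}$ is dominating exactly when $T\cap N[u]\neq\emptyset$. This is the special case $r=1$, $C_1=N[u]$ of the family in Lemma~\ref{l37}. We do not need its inequality; the family is simple enough to count exactly. Set $d=\deg(u)$. Connectivity gives $d\geq 1$, and $u\notin N(v)$ forces $N(u)\subseteq X\setminus\{u\}$. Hence $N[u]\subseteq X$ with $|N[u]|=d+1$ and $|X|=n-1$.

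The plan is to compute $\Gamma_v(G)$ and $\Gamma_v'(G)$ by complementary counting over all $T\subseteq X$.
\begin{itemize}
\item \emph{Count.} There are $2^{n-1}$ subsets of $X$. The ones to discard are those missing $N[u]$ entirely, and there are $2^{n-2-d}$ of them. So $\Gamma_v(G)=2^{n-1}-2^{n-2-d}$, which matches the denominator.
\item \emph{Total size.} Over all $T\subseteq X$, the sum of $|T\cup\{v\}|=1+|T|$ is $2^{n-1}+(n-1)2^{n-2}=(n+1)2^{n-2}$. The excluded sets are the subsets of $X\setminus N[u]$, which has $n-2-d$ elements. Their contribution is $2^{n-2-d}+(n-2-d)2^{n-3-d}=(n-d)2^{n-3-d}$.
\item Subtracting gives $\Gamma_v'(G)=(n+1)2^{n-2}-(n-d)2^{n-3-d}$, which is the numerator. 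Dividing by $\Gamma_v(G)$ yields the claimed formula.
\end{itemize}

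The only delicate step is the boundary case $d=n-2$, where $N(u)=X\setminus\{u\}$ and $X\setminus N[u]=\emptyset$. Then $2^{n-3-d}=\tfrac12$, so the formula must still read correctly. The excluded family is $\{\emptyset\}$, giving count $1=2^{0}$ and size contribution $1$. The formula gives $(n-d)2^{n-3-d}=2\cdot\tfrac12=1$, so it agrees. For $d<n-2$ the identity $\sum_{A\subseteq Y}|A|=|Y|2^{|Y|-1}$ applies directly.

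Alternatively, one can work with the local domination polynomial: $D_v(G,x)=x\big((1+x)^{n-1}-(1+x)^{n-2-d}\big)$. Then take the logarithmic derivative at $x=1$. This reproduces the same computation.
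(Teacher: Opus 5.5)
Your proof is correct and takes the same approach as the paper. The paper likewise notes that a set containing $v$ dominates exactly when it meets $N[u]$, subtracts the sets avoiding $N[u]$, and packages this count as $D_v(G,x)=x\bigl((1+x)^{n-1}-(1+x)^{n-\deg(u)-2}\bigr)$ evaluated with its derivative at $x=1$, which is precisely your stated alternative.
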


\begin{proof}
Since $v$ is adjacent to every vertex except $u$, a set containing $v$ dominates $G$ if and only if it meets $N[u]$. Moreover, $v\notin N[u]$ and $|N[u]|=\deg(u)+1$. Among the sets containing $v$, those disjoint from $N[u]$ have generating polynomial $x(1+x)^{n-\deg(u)-2}$. Therefore,
\begin{equation}\label{eq:near_universal_polynomial}
D_v(G,x)=x\left((1+x)^{n-1}-(1+x)^{n-\deg(u)-2}\right).
\end{equation}
Evaluating this polynomial and its derivative at $x=1$ yields
\begin{align*}
D_v(G,1)&=2^{n-1}-2^{n-\deg(u)-2},\\
D'_v(G,1)&=(n+1)2^{n-2}-(n-\deg(u))2^{n-\deg(u)-3}.
\end{align*}
Taking the ratio $D'_v(G,1)/D_v(G,1)$ proves the formula.
\end{proof}

\begin{corollary}\label{cor:near-universal-sharpness}
Let $G$ be a connected graph of order $n\geq 3$, and let
$v\in V(G)$ satisfy $\deg(v)=n-2$. Let $u$ be the unique
vertex in $V(G)\setminus\{v\}$ not adjacent to $v$. Then
\[
avd_v(G)
=
\frac{n+1}{2}
+
\frac{\deg(u)+1}{2\bigl(2^{\deg(u)+1}-1\bigr)}.
\]
Consequently, the degree-sum upper bound in
Theorem~\ref{t38} is attained whenever
$\deg(v)=n-2$.
\end{corollary}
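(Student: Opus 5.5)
The plan is to start from the exact formula of Theorem~\ref{t42} and reduce it algebraically. Write $d=\deg(u)$ and factor $2^{n-2-d}$ out of numerator and denominator. The numerator becomes $2^{n-2-d}\bigl((n+1)2^{d}-\tfrac{n-d}{2}\bigr)$ and the denominator becomes $2^{n-2-d}(2^{d+1}-1)$. This gives
\[
avd_v(G)=\frac{2(n+1)2^{d}-(n-d)}{2\,(2^{d+1}-1)}.
\]

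Next, I will subtract $\frac{n+1}{2}=\frac{(n+1)(2^{d+1}-1)}{2(2^{d+1}-1)}$. The numerator of the difference is
\[
(n+1)2^{d+1}-(n-d)-(n+1)2^{d+1}+(n+1)=d+1,
\]
which yields the claimed identity. As a sanity check, I would verify one small case directly. Take $G=P_3$ with $v$ an end vertex, so $n=3$ and $d=1$. The identity predicts $2+\frac{2}{6}=\frac73$. Directly, the dominating sets containing $v$ are $\{v,c\}$, $\{v,u\}$ and $\{v,c,u\}$, with average size $\frac73$, which agrees.

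For the consequence, I will specialize Theorem~\ref{t38} with $v_0=v$. Since $\deg(v)=n-2$, the only vertex outside $N[v]$ is $u$. The bound therefore reads
\[
\frac{n+1}{2}+\frac12\cdot\frac{\deg(u)+1}{2^{\deg(u)+1}-1},
\]
which coincides with the value just computed, so equality holds.

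No step is a real obstacle; the only care needed is bookkeeping of the powers of two when factoring out $2^{n-2-d}$. Note that $d\le n-2$, so $n-3-d\ge -1$ and the exponents may be negative. Factoring as above handles this uniformly, since all the manipulations are identities of rational numbers.
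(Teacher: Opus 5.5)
Your proposal is correct and follows essentially the same route as the paper: you simplify the exact formula of Theorem~\ref{t42} to $\frac{(n+1)2^{d+1}-(n-d)}{2(2^{d+1}-1)}=\frac{n+1}{2}+\frac{d+1}{2(2^{d+1}-1)}$. You then note that $V(G)\setminus N[v]=\{u\}$, so the bound of Theorem~\ref{t38} reduces to a single term and coincides with this value.
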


\begin{proof}
Put $d=\deg(u)$. By Theorem~\ref{t42},
\begin{align*}
avd_v(G)
&=
\frac{(n+1)2^{d+1}-(n-d)}
     {2(2^{d+1}-1)}\\
&=
\frac{n+1}{2}
+
\frac{d+1}{2(2^{d+1}-1)}.
\end{align*}
Since $V(G)\setminus N[v]=\{u\}$, this expression
coincides with the degree-sum upper bound in
Theorem~\ref{t38}.
\end{proof}

\section{The local average order of dominating sets containing a fixed stem}

The following lemma provides an upper bound for the global average order of dominating sets in a graph without isolated vertices.

\begin{lemma}\cite[Theorem 2.10]{DS_Brown2024}\label{l51}
Let $G$ be a graph with $n \geq 2$ vertices. If $G$ has no isolated vertices, then
\[
avd(G) \leq \frac{2n}{3},
\]
with equality if and only if $G$ is star-like, where every vertex is either a leaf or an $l$-stem with $l\leq2$.
\end{lemma}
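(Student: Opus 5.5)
We do not reprove this result; it is \cite[Theorem 2.10]{DS_Brown2024}. If a self-contained argument were wanted, we would reduce the global bound to a local bound on small vertex classes and then count dominating sets inside each class. Write $avd(G)=\sum_{w\in V(G)}\Pr(w\in S)$ with $S$ uniform in $\mathcal{D}(G)$. Since $G$ has no isolated vertices, it has a spanning forest whose components are stars $K_{1,k}$ with $k\geq 1$. Partition $V(G)$ into the vertex sets $Y_1,\dots,Y_t$ of these stars. It then suffices to prove $\mathbb{E}|S\cap Y_j|\leq \frac{2}{3}|Y_j|$ for each $j$, and to characterize equality.

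For a fixed star $Y$ with centre $c$ and leaves $\ell_1,\dots,\ell_k$, we would not condition naively on $S\setminus Y$. Instead we would build, for each $R\subseteq V\setminus Y$, an injection-type comparison between the fibre $\{S\in\mathcal{D}(G): S\setminus Y=R\}$ and "large" versus "small" traces on $Y$. The key observation is that if $S$ is dominating and $c\in S$, then $(S\setminus Y)\cup\{c\}$ together with any subset of the leaves is still dominating. The only exception is domination of outside vertices, which can only be lost when vertices of $Y$ other than $c$ are removed.

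So we would pair each dominating trace $T\subseteq Y$ with the traces obtained by toggling leaves while keeping $c$ inside, and with the trace $\{c\}\cup(T\cap\{\text{leaves}\})$. This should show that, within every fibre, the traces behave at worst like those of an isolated star $K_{1,k}$. For an isolated star a direct computation gives
\[
avd(K_{1,k})=\frac{(k+2)2^{k-1}+k}{2^k+1}\leq \frac{2(k+1)}{3},
\]
with equality exactly when $k\in\{1,2\}$.

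The equality case then comes from tracking when every comparison is tight. This forces every fibre to be equivalent to an isolated $K_{1,1}$ or $K_{1,2}$ with no extra domination help from outside, which is the star-like structure in Lemma~\ref{l51}. Conversely, the computation above shows that star-like graphs attain $\frac{2n}{3}$.

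The main obstacle is the fibrewise comparison. Outside vertices $w\notin Y$ may be dominated only through leaves of $Y$. The conditional family of traces on $Y$ is then an arbitrary-looking up-closed family, and up-closed families can have average order as large as $|Y|$ (e.g.\ $\{Y\}$). The first thing we would try is to choose the star forest extremally, for instance minimizing the number of stars and then maximizing the number of leaves. The goal is that any leaf which is the unique dominator of an outside vertex $w$ could be re-attached to a star containing $w$, contradicting extremality. If this fails, we would fall back on the critical-vertex counting of Lemma~\ref{l32} in its global form, $\sum_{S}|a(S)|=2\Gamma'(G)-n\Gamma(G)$. We would then try to sharpen Lemma~\ref{l33} and Lemma~\ref{l35} to $\sum_S|a(S)|\leq \frac{n}{3}\Gamma(G)$, which is equivalent to the claim.
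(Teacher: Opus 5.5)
Citing the theorem is exactly what the paper does: Lemma~\ref{l51} is imported without proof, so as a citation your proposal matches the paper. The self-contained route you sketch, however, fails at its first reduction. The per-class inequality $\mathbb{E}|S\cap Y_j|\le\frac23|Y_j|$ is false in general, and it is false for every choice of spanning star forest.

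Take the path $P_5$ with vertices $1,2,3,4,5$ in order. A set is dominating iff it meets $\{1,2\}$, $\{2,3,4\}$ and $\{4,5\}$, so there are $17$ dominating sets. The only spanning star forests are $\{1,2,3\}\cup\{4,5\}$ and $\{1,2\}\cup\{3,4,5\}$.

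For $Y=\{4,5\}$, $12$ dominating sets contain $4$ and $11$ contain $5$. Hence $\mathbb{E}|S\cap Y|=\frac{23}{17}>\frac43$, and by symmetry the same holds for $\{1,2\}$ in the other forest. The total still works only because the three-vertex class has $\frac{32}{17}<2$.

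The offending fibre is $S\setminus Y=\{1\}$. There vertex $3$ can only be dominated by $4$, so the admissible traces are $\{4\}$ and $\{4,5\}$, with average $\frac32$.
\begin{itemize}
\item If you call $4$ the centre, this is a \emph{forced centre}. The trace family $\{c\}\cup(\text{any set of leaves})$ has average $1+\frac k2$, which exceeds $\frac{2(k+1)}3$ when $k=1$. Nothing in your key observation excludes this, since no outside domination is lost here.
\item If you call $5$ the centre, it is the leaf case you flagged.
\end{itemize}
Either way, the claim that every fibre behaves at worst like an isolated $K_{1,k}$ is false. No extremal choice of star forest can repair it, because $P_5$ has only these two forests. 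Any correct argument must let some local classes exceed their share and be compensated globally.

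Your fallback does not supply the missing idea either. Since $\sum_S|a(S)|=2\Gamma'(G)-n\Gamma(G)$, the target $\sum_S|a(S)|\le\frac n3\Gamma(G)$ is just a restatement of $avd(G)\le\frac{2n}3$. Lemmas~\ref{l33} and \ref{l35} give only $\sum_S|a(S)|\le 2\bigl(n\Gamma(G)-\Gamma'(G)\bigr)$, which is the older bound $\frac{3n}{4}$. Closing that gap is the entire content of the cited theorem.

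A smaller point concerns the converse. Star-like graphs need not be disjoint unions of stars (e.g.\ $P_4$), so the isolated-star computation alone does not cover them. You need the observation that in a star-like graph, $S$ is dominating iff every stem $s$ satisfies $s\in S$ or $L(s)\subseteq S$, counting one stem per $K_2$ component. Then $D(G,x)=\prod_s\bigl(x(1+x)^{l_s}+x^{l_s}\bigr)$, and hence $avd(G)=\sum_s avd(K_{1,l_s})=\frac{2n}3$.
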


In the following result, we consider a type of stem where each of its adjacent vertices is either a leaf or a stem, and prove the upper bound of the local average order of dominating sets containing this kind of stem.

\begin{theorem}\label{t52}
Let $G$ be a graph of order $n \geq 2$ without isolated vertices. Let $v$ be a stem of $G$ with $L(v) = \{v_1, \cdots, v_l\}$ for some integer $l\geq1$. Suppose that each vertex adjacent to $v$ is either a leaf or a stem. Then,
\[
avd_v(G) \leq \frac{4n - l + 2}{6},
\]
with equality if and only if every connected component of $G-L[v]$ is star-like.
\end{theorem}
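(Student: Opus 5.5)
The plan is to factor the dominating sets containing $v$ into three independent parts: $v$ itself, the leaves of $v$, and the rest of the graph. Then I reduce the problem to the global bound of Lemma~\ref{l51} applied to $H=G-L[v]$, which has $n-l-1$ vertices.

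First I describe $\mathcal{D}_v(G)$. Every set $S\ni v$ dominates $L[v]$ and $N(v)$. So $S$ is dominating if and only if $T=S\cap V(H)$ dominates, in $G$, every vertex of $V(H)\setminus N(v)$. No vertex of $L(v)$ is adjacent to anything in $H$, so this domination is the same as domination within $H$. Meanwhile $S\cap L(v)$ can be any of the $2^l$ subsets of $L(v)$. Hence
\[
\mathcal{D}_v(G)=\{\{v\}\cup A\cup T:\ A\subseteq L(v),\ T\in\mathcal{F}\},
\]
where $\mathcal{F}$ is the family of subsets of $V(H)$ dominating $V(H)\setminus N(v)$ in $H$. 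Taking averages gives
\[
avd_v(G)=1+\frac{l}{2}+av(\mathcal{F}).
\]

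The key step is to show that $\mathcal{F}=\mathcal{D}(H)$, which is exactly where the stem hypothesis enters. Take $w\in N(v)\cap V(H)$. By hypothesis $w$ is a stem, so it has a leaf $x$. This leaf $x$ is not $v$, because $w\notin L(v)$ means $w$ has degree at least $2$ unless $v$ is itself a leaf, and that degenerate case is excluded or handled separately. Also $x\notin L(v)$, since leaves of $v$ are adjacent only to $v$. So $x\in V(H)$ and $x\notin N(v)$. Therefore any $T\in\mathcal{F}$ contains $x$ or $w$, and in either case $w$ is dominated within $H$. Thus every $T\in\mathcal{F}$ dominates all of $H$, and the reverse inclusion $\mathcal{D}(H)\subseteq\mathcal{F}$ is trivial. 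This gives $av(\mathcal{F})=avd(H)$.

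Next I check that $H$ has no isolated vertices. Suppose a vertex $y\in V(H)$ is isolated in $H$. Since $G$ has no isolated vertices and the leaves of $v$ are adjacent only to $v$, all neighbours of $y$ in $G$ would be $v$. Then $y$ is a leaf of $v$, contradicting $y\in V(H)$.

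When $H$ is nonempty, Lemma~\ref{l51} (applied componentwise, or directly) gives $avd(H)\le \tfrac{2(n-l-1)}{3}$. Therefore
\[
avd_v(G)\le 1+\frac{l}{2}+\frac{2(n-l-1)}{3}=\frac{4n-l+2}{6}.
\]
Equality holds exactly when $avd(H)=2|V(H)|/3$. By the equality case of Lemma~\ref{l51}, this means every vertex of $H$ is a leaf or an $l$-stem with $l\le 2$ in $H$, that is, every component of $G-L[v]$ is star-like.

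The degenerate case $H=\emptyset$ (so $G$ is a star centred at $v$, $n=l+1$) is checked directly. There $avd_v(G)=1+l/2=(4n-l+2)/6$, with equality holding vacuously, and the convention $avd(\emptyset)=0$ fits this case.

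The main obstacle is the identification $\mathcal{F}=\mathcal{D}(H)$; without it one would need a separate averaging argument for a family that is larger than $\mathcal{D}(H)$. Minor care is also needed to confirm that a leaf of a neighbour $w$ really lies in $H$, and that star-likeness transfers componentwise in the equality case.
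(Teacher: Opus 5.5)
Your proposal is correct and follows the paper's proof essentially step for step. The paper writes the same product decomposition as the polynomial identity $D_v(G,x)=x(1+x)^l D(G-L[v],x)$, uses the retained leaf of each non-leaf neighbor of $v$ to show that $S\setminus L[v]$ dominates $G-L[v]$, checks that $G-L[v]$ has no isolated vertices, and applies Lemma~\ref{l51} with its equality case; it treats $l=n-1$ via Theorem~\ref{t22} rather than by the direct computation you give.

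The one imprecise step is your argument that the leaf $x$ of $w$ is not $v$. It holds simply because $v$ is adjacent to both $v_1$ and $w\neq v_1$, so $\deg(v)\ge 2$ and $v$ is not a leaf. No separate degenerate case needs to be excluded or handled.
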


\begin{proof}
If $l=n-1$, Theorem \ref{t22} gives $avd_v(G)=(n+1)/2=(4n-l+2)/6$, and $G-L[v]$ is empty. Suppose $l<n-1$. The graph $G-L[v]$ has no isolated vertices: any such vertex would have been an additional leaf adjacent to $v$ in $G$. Hence $avd(G-L[v])\leq2(n-l-1)/3$ by Lemma \ref{l51}. Each non-leaf neighbor of $v$ has a leaf neighbor retained in $G-L[v]$, and this leaf forces that neighbor to be dominated within $G-L[v]$. Thus, for every $S\in\mathcal{D}_v(G)$, the set $S\setminus L[v]$ dominates $G-L[v]$. Conversely, any dominating set of $G-L[v]$, together with $v$ and an arbitrary subset of $L(v)$, dominates $G$. Therefore,
\[
d_k(G, v) = \sum_{i=0}^{l} \binom{l}{i} d_{k-1-i}(G - L[v]).
\]
By the domination polynomial (\ref{eq:D_vinS(G,x))}) and the binomial theorem, we obtain that
\begin{align*}
D_v(G, x) &= D(G - L[v], x) \sum_{i=0}^{l} \binom{l}{i} x^{i+1} \\
          &= x D(G - L[v], x) \sum_{i=0}^{l} \binom{l}{i} x^i \\
          &= x(x + 1)^l D(G - L[v], x).
\end{align*}
Differentiating, we get
\[
D'_v(G, x) = \left[(x+1)^l + lx(x+1)^{l-1}\right] D(G - L[v], x) + x(x+1)^l D'(G - L[v], x).
\]
Evaluating the polynomial at $x=1$ gives us
\begin{align*}
D_v(G, 1) &= 2^l D(G - L[v], 1), \\
D'_v(G, 1) &= (2^l + l 2^{l-1}) D(G - L[v], 1) + 2^l D'(G - L[v], 1).
\end{align*}
Now, we can obtain
\begin{align}
avd_v(G) &= \frac{D'_v(G, 1)}{D_v(G, 1)} \nonumber\\
&= \frac{(2^l + l 2^{l-1}) D(G - L[v], 1) + 2^l D'(G - L[v], 1)}{2^l D(G - L[v], 1)} \nonumber\\
&= \left(1 + \frac{l}{2}\right) + avd(G - L[v]) \nonumber\\
&\leq 1 + \frac{l}{2} + \frac{2}{3}(n - l - 1)\label{ineq:Th5.2} \\
&= \frac{4n - l + 2}{6}.\nonumber
\end{align}
Furthermore, one can verify that as $l\geq1$, $avd_v(G) \leq \frac{4n - l + 2}{6} \leq \frac{4n + 1}{6}$. Obviously, the inequality (\ref{ineq:Th5.2}) is an equality if and only if $avd(G - L[v]) = \frac{2}{3}(n - l - 1)$. In other words, every connected component of $G-L[v]$ is star-like.
\end{proof}

The following is the main result in this section.

\begin{theorem}\label{t57}
Let $G$ be a graph of order $n \geq 2$ without isolated vertices and $v$ be an $l$-stem of $G$. For $l \geq 2$,
\[
avd_v(G) < \frac{4n + 1}{6}.
\]
\end{theorem}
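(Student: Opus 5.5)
The plan is to factor out the leaves of $v$, as in the proof of Theorem~\ref{t52}, and then to bound what remains by applying Lemma~\ref{l51} to an auxiliary graph in which $v$ is a $2$-stem.

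\textbf{Step 1: factoring out the leaves.} Put $W=V(G)\setminus L[v]$ and $m=|W|=n-l-1$. A set $S\ni v$ dominates $G$ if and only if $T=S\cap W$ meets $N[u]$ for every $u\in W\setminus N(v)$. The leaves of $v$ are free.

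For such $u$, the set $N[u]$ lies in $W$. Indeed, $u$ is not adjacent to $v$, and leaves of $v$ have no other neighbours. Let $\mathcal{F}$ be the upward-closed family of all such $T\subseteq W$. Then
\[
D_v(G,x)=x(1+x)^l\sum_{T\in\mathcal F}x^{|T|},
\qquad
avd_v(G)=1+\frac l2+\mathbb{E}_{\mathcal F}|T|,
\]
where $\mathbb{E}_{\mathcal F}$ denotes expectation for $T$ chosen uniformly from $\mathcal{F}$. So it suffices to show $\mathbb{E}_{\mathcal F}|T|\le \frac{2m}{3}$. This gives
\[
avd_v(G)\le 1+\frac l2+\frac{2(n-l-1)}3=\frac{4n-l+2}{6}\le\frac{4n}{6}<\frac{4n+1}{6}
\]
because $l\ge2$.

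\textbf{Step 2: the auxiliary graph $G_2$.} Let $G_2=G-\{v_3,\dots,v_l\}$, which keeps exactly two leaves $v_1,v_2$ of $v$. It has $m+3$ vertices and no isolated vertices: every vertex of $W$ keeps all its neighbours in $G$, and it had no neighbours among the deleted leaves. Lemma~\ref{l51} therefore gives
\[
avd(G_2)\le\frac{2(m+3)}{3}.
\]
Split the dominating sets of $G_2$ according to whether they contain $v$.
\begin{itemize}
\item Those containing $v$ are $\{v\}\cup A\cup T$ with $A\subseteq\{v_1,v_2\}$ and $T\in\mathcal F$. Their average order is $2+\mathbb{E}_{\mathcal F}|T|$.
\item Those avoiding $v$ must contain both $v_1$ and $v_2$, which also dominates $v$. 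They are $\{v_1,v_2\}\cup T$ with $T\in\mathcal F'$. Here $\mathcal F'\subseteq\mathcal F$ is the upward-closed subfamily that additionally requires $T$ to dominate every vertex of $N(v)\cap W$. Their average order is $2+\mathbb{E}_{\mathcal F'}|T|$.
\end{itemize}
Thus $avd(G_2)$ is a convex combination of $2+\mathbb{E}_{\mathcal F}|T|$ and $2+\mathbb{E}_{\mathcal F'}|T|$.

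\textbf{Step 3 (main obstacle): the monotonicity comparison.} The key claim is $\mathbb{E}_{\mathcal F'}|T|\ge\mathbb{E}_{\mathcal F}|T|$. I would prove it with the Harris--FKG inequality on the Boolean lattice $\mathcal P(W)$ under the uniform product measure.
\begin{itemize}
\item Conditioning the uniform measure on the increasing event $\mathcal F$ produces the uniform measure on $\mathcal F$.
\item The increasing event $\mathcal F'$ and the increasing function $|T|$ are then positively correlated under this conditioned measure.
\end{itemize}
The uniform measure on an up-set is not itself a product measure, so the second point needs care. The cleanest route is to rewrite the claim as
\[
\mu(|T|\,\mathbf 1_{\mathcal F'})\,\mu(\mathcal F)\ge \mu(|T|\,\mathbf 1_{\mathcal F})\,\mu(\mathcal F'),
\]
using $\mathcal F'=\mathcal F\cap\mathcal F'$. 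This follows from the four-functions (Ahlswede--Daykin) theorem, or equivalently from the FKG lattice condition for the measure $\mathbf 1_{\mathcal F}\mu$. That condition holds because the indicator of an up-set satisfies $\mathbf 1(A)\mathbf 1(B)\le \mathbf 1(A\cup B)\mathbf 1(A\cap B)$. The point I expect to need the most checking is that the intersection of two members of $\mathcal F$ need not lie in $\mathcal F$; I will verify carefully that the inequality required for a product of an up-set indicator with the uniform measure still holds in this case.

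\textbf{Step 4: conclusion.} Granting the comparison, the convex combination in Step 2 is at least $2+\mathbb{E}_{\mathcal F}|T|$, so
\[
2+\mathbb{E}_{\mathcal F}|T|\le avd(G_2)\le\frac{2(m+3)}{3},
\]
that is, $\mathbb{E}_{\mathcal F}|T|\le\frac{2m}3$. Combined with Step 1 this gives the strict inequality. The degenerate case $m=0$, where $\mathcal F=\{\emptyset\}$, is covered directly by Theorem~\ref{t22}.
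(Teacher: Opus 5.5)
\textbf{There is a genuine gap at Step 3.} Your Steps 1 and 2 are correct, and $\mathbb{E}_{\mathcal F}|T|\le 2m/3$ is a legitimate target. However, the comparison in Step 3 is false, so Step 4 collapses. The lattice condition you invoke, $\mathbf 1_{\mathcal F}(A)\mathbf 1_{\mathcal F}(B)\le \mathbf 1_{\mathcal F}(A\cup B)\mathbf 1_{\mathcal F}(A\cap B)$, fails whenever $A,B\in\mathcal F$ but $A\cap B\notin\mathcal F$, which is exactly the worry you flagged. The uniform measure on an up-set is not positively associated, and neither Harris nor Ahlswede--Daykin applies.

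The claim also cannot be rescued by another argument, because it is false in this very setting. Let $v$ have leaves $v_1,v_2$ and one further neighbour $b_1$, which is a leaf of a star $K_{1,5}$ with centre $a$ and leaves $b_1,\dots,b_5$. Then $n=9$, $m=6$ and $G_2=G$. Here $\mathcal F$ consists of the $32$ sets containing $a$ together with $\{b_2,\dots,b_5\}$ and $\{b_1,\dots,b_5\}$, so $\mathbb{E}_{\mathcal F}|T|=121/34$. The subfamily $\mathcal F'=\mathcal D(K_{1,5})$ loses the above-average set $\{b_2,\dots,b_5\}$, so $\mathbb{E}_{\mathcal F'}|T|=117/33<121/34$. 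Consequently $avd(G_2)=939/169<189/34=2+\mathbb{E}_{\mathcal F}|T|=avd_v(G)$. So the inequality $2+\mathbb{E}_{\mathcal F}|T|\le avd(G_2)$ in Step 4 fails, even though the theorem itself holds for this graph. Deleting the lower members of an up-set can delete large sets.

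\textbf{What is missing} is a way to bound $\alpha=\mathbb{E}_{\mathcal F}|T|$ without comparing it with $\beta=\mathbb{E}_{\mathcal F'}|T|$. Lemma~\ref{l51} applied to $G_2$ only gives $4A(\alpha-\frac{2m}{3})+B(\beta-\frac{2m}{3})\le0$, where $A=|\mathcal F|\ge B=|\mathcal F'|$. Since $\beta$ enters with the non-negligible weight $B/(4A+B)$ and can be smaller than $\alpha$, this does not control $\alpha$ by itself.

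The paper removes the second term by amplification. It attaches $r$ disjoint copies of $H=G[W]$ to a single new $2$-stem $z$, joining $z$ to every copy of $N(v)\cap W$. The resulting graph $J_r$ has no isolated vertices and satisfies $D(J_r,x)=x(1+x)^2P(x)^r+x^2D(H,x)^r$, where $P(x)=\sum_{T\in\mathcal F}x^{|T|}$. Lemma~\ref{l51} on $J_r$ then yields $4A^r(\alpha-\frac{2m}{3})+B^r(\beta-\frac{2m}{3})\le0$.
\begin{itemize}
\item If $A>B$, the second term is negligible for large $r$, using only $\beta\ge0$; letting $r\to\infty$ even gives your target $\alpha\le 2m/3$.
\item If $A=B$, then $\mathcal F=\mathcal F'$ and $\alpha=\beta$.
\end{itemize}
With this replacement for Steps 3 and 4, your Step 1 reduction from $l$ leaves to $\mathbb{E}_{\mathcal F}|T|$ finishes the proof, essentially as in the paper.
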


\begin{proof}
We first consider $l=2$. Let $w_1,w_2$ be the two leaf neighbors of $v$, and put
\[
H=G-\{v,w_1,w_2\},\qquad h=n-3,\qquad
U=N_G(v)\setminus\{w_1,w_2\}.
\]
The graph $H$ has no isolated vertices. Indeed, an isolated vertex of $H$ would have had $v$ as its only neighbor in $G$, contradicting the assumption that $v$ has exactly two leaf neighbors. Define
\[
\mathcal{A}=\{T\subseteq V(H):V(H)\setminus U\subseteq N_H[T]\},
\qquad P(x)=\sum_{T\in\mathcal{A}}x^{|T|}.
\]
When $v$ is selected, its two leaves can be chosen freely and the vertices in $U$ are already dominated. Therefore,
\begin{equation}\label{eq:two_leaf_local}
D_v(G,x)=x(1+x)^2P(x).
\end{equation}
Write
\[
A=P(1),\quad B=D(H,1),\quad
\alpha=\frac{P'(1)}{P(1)},\quad \beta=avd(H).
\]
Then $avd_v(G)=2+\alpha$. Also, $\mathcal{D}(H)\subseteq\mathcal{A}$, so $A\geq B>0$.

If $A=B$, the two families are equal and $P(x)=D(H,x)$. By Lemma \ref{l51}, with the empty-graph convention when necessary,
\[
avd_v(G)=2+\beta\leq2+\frac{2h}{3}
=\frac{2n}{3}<\frac{4n+1}{6}.
\]
Now suppose $A>B$. Choose a positive integer $r$ such that
\begin{equation}\label{eq:finite_copy_choice}
A^r>hB^r.
\end{equation}
Take $r$ disjoint copies $H_1,\ldots,H_r$ of $H$, add a vertex $z$ with two new leaf neighbors, and join $z$ to the copy of each vertex of $U$ in every $H_i$. Denote the resulting graph by $J_r$. It has no isolated vertices and has order $rh+3$. Separating its dominating sets according to whether they contain $z$ gives
\begin{equation}\label{eq:finite_copy_polynomial}
D(J_r,x)=x(1+x)^2P(x)^r+x^2D(H,x)^r.
\end{equation}
In the second term, both leaves must be selected and each copy of $H$ must be dominated internally. Taking the logarithmic derivative of \eqref{eq:finite_copy_polynomial} at $x=1$ yields
\[
avd(J_r)=2+r\frac{4A^r\alpha+B^r\beta}{4A^r+B^r}.
\]
Applying Lemma \ref{l51} to $J_r$ and rearranging, we obtain
\begin{equation}\label{eq:finite_copy_bound}
4A^r\left(\alpha-\frac{2h}{3}\right)
+B^r\left(\beta-\frac{2h}{3}\right)\leq0.
\end{equation}
Suppose, to the contrary, that $avd_v(G)\geq(4n+1)/6$. Since $n=h+3$, this implies $\alpha-2h/3\geq1/6$. As $\beta\geq0$, the left-hand side of \eqref{eq:finite_copy_bound} is at least
\[
\frac{4A^r}{6}-\frac{2hB^r}{3}
=\frac23(A^r-hB^r)>0,
\]
contrary to \eqref{eq:finite_copy_bound}. This proves the result for $l=2$.

Finally, let $l\geq2$, and obtain $G_0$ by deleting $l-2$ of the leaf neighbors of $v$. Then $G_0$ has no isolated vertices, has order $n-l+2$, and $v$ has exactly two leaf neighbors in $G_0$. Since the deleted leaves are free choices in a dominating set containing $v$,
\[
D_v(G,x)=(1+x)^{l-2}D_v(G_0,x).
\]
The case already proved gives
\begin{align*}
avd_v(G)&=\frac{l-2}{2}+avd_v(G_0)\\
&<\frac{l-2}{2}+\frac{4(n-l+2)+1}{6}
=\frac{4n-l+3}{6}\leq\frac{4n+1}{6}.
\end{align*}
\end{proof}

\section{Concluding remarks}

By Theorem \ref{t52}, $avd_v(G)\leq(4n+1)/6$ whenever $v$ is a stem and each of its neighbors is either a leaf or a stem. Theorem \ref{t57} establishes the strict inequality for every $l$-stem with $l\geq2$, without any further condition on its neighbors. The remaining case concerns $1$-stems with non-leaf neighbors that are not stems. This leads to the following conjecture.

\begin{conjecture}\label{conj61}
Let $G$ be a graph of order $n\geq2$ without isolated vertices, and let $v$ be an $l$-stem of $G$, where $l\geq1$. Then
\[
avd_v(G)\leq\frac{4n+1}{6},
\]
with equality if and only if $l=1$ and $G$ is star-like.
\end{conjecture}

\end{document}